\documentclass[12pt]{amsart}
\usepackage[utf8]{inputenc}
\usepackage{amsmath,amsthm,amssymb}
\usepackage{xcolor}

\usepackage[shortlabels]{enumitem}
\usepackage{hyperref}
\usepackage{mathtools}
\usepackage{graphicx}
\newtheorem{theorem}{Theorem}[section]

\newtheorem{defn}[theorem]{Definition}
\newtheorem{definition}[theorem]{Definition}

\newtheorem{lemma}[theorem]{Lemma}
\newtheorem{proposition}[theorem]{Proposition}

\newtheorem{corollary}[theorem]{Corollary}
\mathtoolsset{showonlyrefs}

\newcounter{stepctr}
\newif\ifinsteppage

\newenvironment{steps}{%
	\setcounter{stepctr}{0}%
	\insteppagefalse
}{%
	\par\bigskip
}

\newcommand{\step}[1]{%
	\ifinsteppage
	\par\bigskip
	\fi
	\insteppagetrue
	\refstepcounter{stepctr}%
	\noindent\text{Step \thestepctr:} #1.\par
}

\newcommand{\dem}{\de^{-O(\e)}}
\newcommand{\dep}{\de^{O(\e)}}
\newcommand{\demm}{\de^{-\e}}
\newcommand{\depp}{\de^\e}

\newcommand{\R}{\mathbb{R}}

\newcommand{\Z}{\mathbb{Z}}

\newcommand{\les}{\lessapprox}

\newcommand{\cS}{\mathcal{S}}

\newcommand{\cP}{\mathcal{P}}
\newcommand{\cT}{\mathcal{T}}

\newcommand{\cD}{\mathcal{D}}
\newcommand{\cL}{\mathcal{L}}

\newcommand{\cN}{\mathcal{N}}

\newcommand{\cI}{\mathcal{I}}

\newcommand{\pop}{D_{\mathrm{pop}}}
\newcommand{\spop}{S_{\mathrm{pop}}}
\newcommand{\dy}[2]{\cD_{#1}(#2)}

\newcommand{\lbox}{\underline\dim_{\mathrm{B}}}
\newcommand{\ubox}{\overline\dim_{\mathrm{B}}}

\newcommand{\en}[2]{\mathrm{E}_{#1,#2}}

\newcommand{\cont}[1]{\mathcal{H}^{#1}_{[\delta,\infty)}}

\newcommand{\de}{\delta}
\newcommand{\e}{\epsilon}

\newcommand{\ups}{\upsilon}

\newcommand{\D}{\mathcal{D}}

\newcommand{\dimh}{\dim_\mathrm{H}}

\newcommand{\cY}{\mathcal{Y}}

\newcommand{\cns}[2]{N_{{#1}}({#2})}

\newcommand{\cn}[1]{N_\de({#1})}
\newtheorem*{ack*}{Acknowledgement}

\begin{document}
	\title{A note on the sum-product problem for fractal sets}
	\author{Adam Cushman}
    \author{William O'Regan}

	\keywords{tube incidences, packing conditions, discretised sum-product}
	\thanks{WOR is supported in part by an NSERC Alliance grant administered by Pablo Shmerkin and Joshua Zahl}
	\begin{abstract}
		Utilising recent advances in incidence geometry for balls and tubes, and advances in sum-product theory in the discrete setting, we show that for $0 < s \leq 1/2$ and for any $A \subset \R$ with Hausdorff dimension $s,$ either the upper-box dimension of $AA,$ or the lower-box dimension of $A+A$ must be at least $29s/23.$ We obtain the slightly better bound of \(33 s / 26\) when we replace the sum-set with the smoother difference-set. 
	\end{abstract}
	\maketitle

	\section{Introduction}
    The Erd\H{o}s--Volkmann ring problem \cite{erd} asks whether there are any (Borel) subrings of $\R$ which do not have Hausdorff dimension $0$ or $1.$ In \cite{falconerdistance} Falconer showed (as a corollary of progress on the distance problem) that there are no Borel subrings of Hausdorff dimension strictly between $1/2$ and $1.$ Motivated by the distance problem (and other problems) in \cite{kat} Katz and Tao suggested the discretised ring problem, which is a more quantitative version of ring problem. They asked that, for a discrete set $A,$ which at scale $\de$ resembles a set of Hausdorff dimension $1/2,$ to show that there exists a uniform $c_0 > 0$ so that 
    $$\cn{A+A} + \cn{AA} > \de^{-1/2-c_0}.$$ This was resolved in \cite{bou03} by Bourgain. Independently, using a completely different method, Edgar and Miller \cite{ed} found a clever proof of the (Erd\H{o}s--Volkmann) ring problem.

    It remains open as to how large the constant $c_0$ ought to be. In terms of fractal dimension, the discretised ring problem may ask for $A \subset \R$ with Hausdorff dimension $0 < s < 1$ how large
    \begin{equation}\label{eq.quantity}
    \lbox((A+A) \times AA) 
    \end{equation}
    or $$\dimh ((A+A) \times AA) $$
    would have to be.
    
    When $0 < s \leq 2/3$ the work of \cite{renwang} combined with the argument of Elekes \cite{elekes1997number} show that both are bounded below by $5s/2.$ We are not able to make any improvement of this quantity in this note, however, we remark that by product formulae the quantity \eqref{eq.quantity} may be bounded above by both
\begin{equation}\label{eq.thisone}
\lbox{(A+A)} + \ubox{AA},
\end{equation}
and \begin{equation}
    \ubox{(A+A)} + \lbox{AA}.
\end{equation}
Focussing our attention on \eqref{eq.thisone}, the lower bound of $5s/2$ tells us that
\begin{equation}\label{eq.dimensions}
     \max\{\ubox{(AA)}, \lbox {(A+A)}\} \geq 5s/4.
\end{equation}
The aim of this note is to improve this lower bound to $29s / 23$ when $0 < s \leq 1/2.$ It was obtained (using a different technique) in \cite{doprod} that 
\begin{equation}
     \max\{\lbox{(A/A)}, \ubox {(A+A)}\} \geq 43s/34,
\end{equation}
for $0 < s \leq 1/2.$ 

It is unclear what the correct lower bound ought to be. In the discrete case, for $A \subset \R$ it is conjectured that for all $\e > 0$ there exists $c > 0$ so that
$$|A+A| + |AA| > c|A|^{2-\e}.$$
In the fractal case, certainly the improvement cannot be larger than $\min\{s,1-s\}.$ Further, in \cite{mator} there is an example showing that the increase cannot be larger that $(1-s)/2.$ When $s$ is small, it is unclear whether to expect an improvement of $s.$

For all $0 < s < 1$ there is an example $A \subset \R$ in \cite{mator} with 
$$\lbox(AA) =  \lbox(A+A) = s,$$
and so \eqref{eq.dimensions} cannot be improved much in the sense of replacing $\ubox$ with a finer definition of dimension.
 
As previously mentioned, the results presented here may be viewed as companion results to \cite[Theorem 1.10 and Theorem 1.11]{doprod}. There, similar results were obtained with $AA$ replaced with $A/A$ and $A-A$ replaced with $A
+A,$ and with $33/26$ replaced by the slightly worse $43/34.$ The reader is invited to view \cite[Subsection 1.4]{doprod} for the results presented there.
		\begin{theorem}\label{thm.sumproddim}
		Let $0 < s \leq 1/2.$ Let $A \subset \R$ satisfy $\dimh A = s.$ Then
        \begin{equation}
			\max\{\ubox{(AA)}, \lbox {(A+A)}\}\geq 29s/23.
		\end{equation}
		\begin{equation}
			\max\{\ubox{(AA)}, \lbox {(A-A)}\}\geq 33s/26.
		\end{equation}
	\end{theorem}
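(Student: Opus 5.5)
We first reduce to a discretised statement at a single scale $\de$. Suppose, for contradiction, that $\ubox(AA) < 29s/23 - \eta$ and $\lbox(A+A) < 29s/23 - \eta$. Since $\dimh A = s$, Frostman's lemma gives a compact subset $A' \subset A$ (which we may place inside $[1,2]$ after translating, dilating and discarding a piece, so that multiplication is bi-Lipschitz) carrying a measure $\mu$ with $\mu(I) \lesssim |I|^{s-\e}$ for every interval $I$. The upper box bound on $AA$ holds at \emph{all} small scales. The lower box bound on $A+A$ holds along a sequence $\de_k \to 0$. At any $\de = \de_k$ we therefore get, simultaneously,
\[ \cn{A'A'} \le \de^{-29s/23+\eta}, \qquad \cn{A'+A'} \le \de^{-29s/23+\eta}. \]
This asymmetry between upper and lower box dimension is exactly why the statement pairs $\ubox$ with $AA$ and $\lbox$ with $A+A$. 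Pigeonholing $\mu$, we pass to a $\de$-discretised set $P \subset A'$ of $\approx \de^{-s}$ $\de$-separated points that is a Katz--Tao $(\de,s)$-set up to $\de^{-\e}$ losses. We may also take $P$ uniform across dyadic scales, using the standard uniformisation lemma, at $\de^{-O(\e)}$ cost.

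Next we set up the Elekes-type incidence configuration. Consider the $\de$-points $\cP = (P+P)\times (PP)$, or $(P-P)\times(PP)$ in the second case. Consider also the $\de$-tubes $\cT$ around the lines $y = a(x - b)$, where $a,b \in P$. Each tube contains the $\approx \de^{-s}$ points $(c+b, ac)$ with $c\in P$, so we get $I(\cP,\cT) \gtrsim \de^{-3s}$. Here $|\cT| \approx \de^{-2s}$ and $|\cP| \le \de^{-58s/23 + 2\eta}$. The tube family is a Cartesian product of a $(\de,s)$-set of slopes with a $(\de,s)$-set of intercepts, so it satisfies strong two-ends and Frostman-type packing conditions. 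The point set is a product of two sets with controlled covering numbers at every scale, by the assumption together with the multiplicative and additive structure. We then invoke the recent ball--tube incidence bounds for sets satisfying such packing conditions, in the style of Fu--Ren and Ren--Wang. The resulting upper bound on $I(\cP,\cT)$ involves a power of $|\cP|$ and the packing constants. Comparing it with $\de^{-3s}$ bounds $|\cP|$ from below, which recovers the $5s/4$ baseline.

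To beat $5s/4$, we exploit that the product $AA$ is controlled at every scale and that $s\le 1/2$. The plan is to split into two cases according to the multi-scale behaviour of $P+P$ (or $P-P$). If the sumset is not close to $\de^{-5s/4}$ at many intermediate scales, a scale-by-scale additive-energy argument gives growth directly. Otherwise $P+P$ is nearly extremal, and we use the discrete sum-product input: a Balog--Szemer\'edi--Gowers step, followed by a Solymosi/Rudnev--Stevens-type energy bound, transferred to the discretised setting. For the difference set, the smoother energy relation $E_+(P)\le |P-P|\cdots$ gives the better exponent $33/26$, because the relevant Cauchy--Schwarz loses less. Optimising the threshold parameter between the two cases should produce the exponents $29/23$ and $33/26$.

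The main obstacle is this final step. We must verify the packing/Frostman hypotheses of the incidence theorem for $\cP$ at all scales between $\de$ and $1$, where only single-scale information on $A+A$ is available. We must also make the discrete energy estimates survive the $\de$-discretisation. We would first try to handle this by choosing $\de=\de_k$ and applying the hypothesis on $AA$ at every coarser scale $\Delta \in [\de,1]$, which the upper box dimension permits.
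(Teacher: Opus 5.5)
Your reduction is fine and is essentially the deduction the paper defers to \cite{doprod}. The hypothesis on $\ubox(AA)$ controls $\cns{\rho}{AA}$ at every scale $\de<\rho<\de^{C\e}$, while $\lbox(A+A)$ only supplies the single scale $\de=\de_k$; this is exactly the shape of Theorems \ref{thm.dissum} and \ref{thm.dis}. Do not translate $A$, though, since translation does not respect products; restrict to $A\cap\pm[2^k,2^{k+1}]$ and dilate instead. Your Elekes configuration also correctly recovers the $5s/4$ baseline.

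The genuine gap is everything beyond $5s/4$. The two-case plan (a ``scale-by-scale energy argument'' versus BSG followed by a Rudnev--Stevens-type bound) is never carried out. No inequality relating $\#A$, $\cn{A\pm A}$ and $\cn{AA}$ is derived, and nothing in the sketch produces $29/23$ or $33/26$; the claim that optimising a threshold ``should'' give these numbers is unsupported. BSG is also a poor tool here: it typically loses powers of the doubling constant $K=\de^{-\Theta(s)}$, i.e.\ a fixed amount in the exponent.

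The missing idea is the energy framework, in three steps.
\begin{enumerate}
\item The paper applies Theorem \ref{maininct} to the quasi-product family of tubes around $y=x/a-b$, $(a,b)\in A\times B$, with shadings that are affine copies of $A$ and points in $AA\times C$ (Lemma \ref{lem.representations}). This is where $s\le 1/2$ enters, to guarantee $2s\le 2-t$. It yields $\en 3\de(A)\lessapprox\cn{AA}^2\#A$ and $\en 3\de(A,\pop)\lessapprox\cn{AA}^2\#\pop^3/(\#A\,\de^{-\sigma})$, where $\sigma$ is the all-scales dimension of a uniform popular difference set $\pop$.
\item The Elekes/Furstenberg argument is used not merely to bound the size of your point set, but to show $\cont{5s/2-\ups}(\pop)\gtrsim\dep$, hence $\sigma\ge 5s/2-\ups$. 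This is where the all-scales control of $AA$ is spent. It enters the energy bound through the Katz--Tao constant $\de^{\sigma}\#\pop$ of $\pop$.
\item A Cauchy--Schwarz count of solutions to $p_1-p_2=p_3+O(\de)$, with $p_1,p_2\in\pop$ and $p_3$ in a weaker popular set, gives $\#A^{15/2}\lessapprox\en 3\de(A)\,\en 3\de(A,\pop)^{1/2}\#D\,\#\pop^{1/2}$.
\end{enumerate}
Combining these gives $33s-O(\e)\le 12\tau+14\ups$. The sumset analogue, via $\mathrm{E}_{2,\de}(R_1,R_2)\gtrsim\#A^4/\#S$ and a Katz--Tao dyadic level set for the $7/4$-energy, gives $29s-O(\e)\le 11\tau+12\ups$.

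Relatedly, the obstacle you flag is misplaced. Theorem \ref{maininct} needs Katz--Tao conditions on the tubes and on the shadings, not packing conditions on the point set. The point set enters only through $\#Y(\cT)\lesssim\cn{AA}\,\#C$.
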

    Theorem \ref{thm.sumproddim} will follow from discretised versions below. The deduction is more-or-less identical to the one presented in \cite{doprod} so we omit this here.
    	\begin{theorem}\label{thm.dissum}
		Let $0 < s \leq 1/2$. There exists $C,\delta_0, \epsilon> 0$ so that the following holds for all $0 < \delta < \delta_0.$
		
		\indent Let $A \subset [1/2,1]$ be a $(\delta, s, \de^{-\e})$-set. Then one of the following must hold.
		\begin{equation}
			\cns{\rho}{AA} > \rho^{-29s/23} \text{ for some } \delta < \rho < \delta^{C\e},
		\end{equation}
		\begin{equation}\label{eq.haus}
			\cn{A+A} > \delta^{-29s/23+ C\e}.
		\end{equation}
	\end{theorem}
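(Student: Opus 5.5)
Argue by contradiction: assume $\cns{\rho}{AA}\le \rho^{-29s/23}$ for every $\de<\rho<\de^{C\e}$ and $\cn{A+A}\le \de^{-29s/23+C\e}$, and reach a contradiction through an Elekes-type incidence count combined with a discretised sum-product energy argument. The multiplicative hypothesis is a packing condition at all scales. Since it holds at every intermediate scale, $AA$ (or a large subset of it) behaves like a $(\de,29s/23)$-Katz--Tao set. This is the input needed by the recent ball--tube incidence estimates (Fu--Ren / Ren--Wang type bounds) under Frostman-type spacing.

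First I would form the Elekes configuration. Take the point set $P=(A+A)\times(AA)$, discretised at scale $\de$. Take the family of $\de$-tubes $\cT=\{\ell_{a,b}: y=b(x-a)\}$ for $a,b\in A$. Each tube $\ell_{a,b}$ contains the $\de$-balls $(a+c,\,bc)$ for $c\in A$, so it meets $\gtrsim \de^{-s+O(\e)}$ points of $P$. Hence the number of incidences is $I(P,\cT)\gtrapprox \de^{-3s}$. The tube family is a $(\de,2s)$-set of tubes, with a Katz--Tao $(\de,s)$ property in directions inherited from $A$ being a $(\de,s,\de^{-\e})$-set. The point set has cardinality $\lesssim \de^{-58s/23+C\e}$ and spacing controlled by the $\cns{\rho}{AA}$ packing condition. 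Feeding these into the incidence theorem should give an upper bound on $I(P,\cT)$ of the form $|P|^{\alpha}|\cT|^{\beta}$ times spacing factors. Comparing with $\de^{-3s}$ reproduces the baseline $5s/4$.

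To improve on $5s/4$, I would run a second, energy-based argument in the spirit of the discrete sum-product improvements (Solymosi / Rudnev--Stevens style). Combine a bound on the multiplicative energy of $A$, which is large because $AA$ is small, with the small sumset. The small sumset gives, via a discretised Pl\"unnecke--Ruzsa inequality, small iterated sumsets such as $A+A-A$ or $A-A$ covering numbers. Using Balog--Szemer\'edi--Gowers at scale $\de$, I would pass to a large subset $A'\subset A$ with controlled multiplicative structure. I would then apply the incidence bound to a refined configuration, for example points $(A'-A')\times(A'A')$ with tubes indexed by $A'\times A'$, or lines through ratios of differences. The aim is an inequality mixing the two exponents $\sigma=\log\cn{A+A}/\log\de^{-1}$ and $\pi$ (the packing exponent for $AA$). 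Optimising with $\sigma=\pi$ should yield the threshold $29/23$. In the difference-set version, no Pl\"unnecke loss is incurred from passing between $A+A$ and $A-A$, which explains the better $33/26$.

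The main obstacle is the spacing-condition bookkeeping in the incidence step. The ball--tube theorem requires Frostman/Katz--Tao conditions at all scales, while we only control $AA$ through the upper covering numbers at scales $\rho\in(\de,\de^{C\e})$ and $A+A$ only at scale $\de$. I would first handle this by a pigeonholing/uniformisation step: pass to a regular subset of $A$ whose branching is uniform across dyadic scales. I would then use the restriction $s\le 1/2$ to ensure the relevant dimensions stay in the range where the incidence theorem is sharp. This leaves the final numerics as a linear optimisation over the branching function.
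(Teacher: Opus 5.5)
Your outline does not contain a mechanism that produces $29/23$. The Elekes step, as you set it up, only recovers the $5s/4$ baseline. The second step lists tools (large multiplicative energy, Pl\"unnecke--Ruzsa, Balog--Szemer\'edi--Gowers, a ``refined configuration'') and then asserts that optimising ``should yield'' $29/23$, without deriving any inequality.

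Several of these tools would fail here:
\begin{itemize}
\item Under your contradiction hypothesis the doubling constants are powers of $\delta$ (e.g.\ $K\approx\delta^{-6s/23}$). So the $K^{O(1)}$ losses in BSG and Pl\"unnecke are power losses, which change the exponent.
\item A BSG subset $A'$ need not be a $(\delta,s)$-set, yet that is what the ball--tube theorem requires of the shadings.
\item Upper bounds $N_\rho(AA)\le\rho^{-u}$ at all scales are a smallness condition, not a non-concentration condition, so they do not make $AA$ Katz--Tao.
\item No such condition is needed anyway. The quasi-product incidence theorem puts no spacing condition on the point set, only product structure on the tubes and a KT condition on the shadings.
\item Small $N_\delta(AA)$ is not exploited through large multiplicative energy. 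It is used to bound \emph{additive} energies from above via incidences.
\end{itemize}

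The missing idea is the coupling between the two arguments. In the paper, the all-scales hypothesis $N_\rho(AA)\le\rho^{-\upsilon}$ converts the Furstenberg/Elekes bound $\mathcal{H}^{5s/2}_{[\delta,\infty)}(S_{\mathrm{pop}}\times AA)\gtrsim\delta^{O(\epsilon)}$ into a dimension bound $\sigma\ge 5s/2-\upsilon$. Here $S_{\mathrm{pop}}$ is an $\epsilon$-uniform popular part of $A+A$.

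That $\sigma$ then feeds an energy estimate. Write $|c-(a'a/a-b)|\lesssim\delta$ as incidences between points $AA\times C$ and tubes $y=x/a-b$, $(a,b)\in A\times B$, shaded by affine copies of $A$. The quasi-product theorem then gives $\mathrm{E}_{3,\delta}(A,B)\lessapprox\delta^{-s-t}N_\delta(AA)^2\#B/\#A^2$ for a $(\delta,t)$-KT set $B$. This is where $s\le 1/2$ enters, through the requirement $2s\le 2-t$. For $B=S_{\mathrm{pop}}$, a $(\delta,\sigma,\delta^{\sigma}\#S_{\mathrm{pop}})$-KT set, it gives $\mathrm{E}_{3,\delta}(A,S_{\mathrm{pop}})\lessapprox\delta^{\sigma}N_\delta(AA)^2\#S_{\mathrm{pop}}^3/\#A$.

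The lower bound comes from a Cushman-type argument:
\begin{itemize}
\item The symmetry $\mathrm{E}_{2,\delta}(A,B)\sim\mathrm{E}_{2,\delta}(A,-B)$ gives $\mathrm{E}_{2,\delta}(R_1,R_2)\gtrsim\#A^4/N_\delta(A+A)$.
\item For the matching upper bound, use the identity $(r_1+a)-(r_2+a)=r_1-r_2$ and Cauchy--Schwarz, with collisions controlled by $\mathrm{E}_{3,\delta}(A)$.
\item Then apply H\"older through a dyadic level set $P_\Delta'$ of $R_1-R_2$, which is $(\delta,s,\#A/\Delta)$-KT.
\end{itemize}
The outcome is $\delta^{-\sigma/5}\#A^{11/2}\lessapprox\delta^{-s/5}N_\delta(A+A)^{11/5}N_\delta(AA)^{11/5}$.

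With only the trivial $\sigma\ge s$, this returns exactly $5s/4$. It is the bound $\sigma\ge 5s/2-\upsilon$ that gives $29s/5\le 11\tau/5+12\upsilon/5$ (with $N_\delta(A+A)=\delta^{-\tau}$), hence $29s/23$. Without this step your outline has no source of improvement over $5s/4$.

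Your explanation of why the difference case gives $33/26$ is also off. The gain comes from the symmetric identity $(r-a_1)-(r-a_2)=a_2-a_1$, not from avoiding Pl\"unnecke losses.
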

	\begin{theorem}\label{thm.dis}
		Let $0 < s \leq 1/2$. There exists $C,\delta_0, \epsilon> 0$ so that the following holds for all $0 < \delta < \delta_0.$
		
		\indent Let $A \subset [1/2,1]$ be a $(\delta, s, \de^{-\e})$-set. Then one of the following must hold.
		\begin{equation}
			\cns{\rho}{AA} > \rho^{-33s/26} \text{ for some } \delta < \rho < \delta^{C\e},
		\end{equation}
		\begin{equation}\label{eq.haus}
			\cn{A-A} > \delta^{-33s/26+C\e}.
		\end{equation}
	\end{theorem}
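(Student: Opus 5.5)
Assume, for contradiction, that $\cns{\rho}{AA}\le \rho^{-33s/26}$ for every $\delta<\rho<\delta^{C\e}$ and that $\cn{A-A}\le \delta^{-33s/26+C\e}$. We then derive an incidence count that is too large. The approach is Elekes-style, adapted to the discretised setting. Set $P = (A-A)\times AA$, discretised at scale $\delta$. The hypotheses say $P$ has small covering numbers. The multi-scale bound on $AA$ gives a Frostman-type (Katz--Tao) non-concentration condition for $AA$ at every scale, which is why the hypothesis is phrased for all $\rho$ and not only for $\rho=\delta$. Next form the family of $\delta$-tubes $T_{a,b}$ around the lines $\ell_{a,b}=\{(x,y): y = b(x+a)\}$ for $a,b\in A$. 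For $a,c,b\in A$, the tube $T_{a,b}$ contains the point $(c-a,\, bc)$, up to $O(\delta)$. Indeed, $(c - a) + a = c$ and $bc\in AA$, so the point lies in $(A-A)\times AA$ with $c\in A$ free. Thus each of the roughly $\delta^{-2s}$ tubes contains about $\delta^{-s}$ points of $P$. The total incidence count is therefore $I(P,\cT)\gtrsim \delta^{-3s+O(\e)}$.

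\textbf{Step 1: spacing conditions.} Because $A$ is a $(\delta,s,\delta^{-\e})$-set, the tube set $\cT$, indexed by $A\times A$ in slope--intercept coordinates, is a $(\delta,2s)$-set of tubes. I will use the Cartesian-product form, which comes with a two-ends/product structure. The point set $P$ is a product $X\times Y$ with $X=A-A$ and $Y=AA$. Here $Y$ satisfies the covering bounds $\cns{\rho}{Y}\le\rho^{-33s/26}$ at all intermediate scales. For $X$, I use $|X|_\delta\le \delta^{-33s/26+C\e}$, together with the fact that $X\supset A-a$ inherits non-concentration from $A$. The smoother difference set helps here: $A-A$ contains the translates $A-a$ for all $a$, and it is symmetric. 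I expect this extra structure is what yields $33/26$ instead of $29/23$, through a Plünnecke/Ruzsa-type bound controlling $\cn{A+A}$-type energies by $\cn{A-A}$. The sum-set case would need an extra Ruzsa triangle loss.

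\textbf{Step 2: incidence upper bound.} I would invoke a recent sharp incidence theorem for $\delta$-balls and $\delta$-tubes under Katz--Tao/Frostman spacing, in the style of Fu--Ren or Ren--Wang for well-spaced or product-type sets. This gives a bound of the form
\[ I(P,\cT)\lessapprox |P|^{\alpha}|\cT|^{\beta}\,\delta^{-\gamma} \]
with exponents depending on the spacing dimensions. Plugging in $|\cT|\approx\delta^{-2s}$ and $|P|\le \delta^{-66s/26+O(\e)}$, and comparing with the lower bound $\delta^{-3s}$, already reproduces a $5s/4$-type bound. To beat it, I would first use a discrete sum-product input (for instance a Rudnev--Stevens style energy bound, or a bound on the multiplicative energy of $A$ versus $|A-A|$) to refine $A$. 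The refined $A$ is a subset on which the point set and the tube set have better "regular" structure. In particular, I would pigeonhole $A$ to be uniform across scales, so that the multi-scale hypothesis on $AA$ can be used at an optimally chosen $\rho$. One then runs the incidence estimate at scale $\rho$ and at scale $\delta/\rho$ separately and multiplies the results.

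\textbf{Step 3: optimisation.} After the refinement, the counts lead to a linear inequality in the unknown exponents $\sigma_+=\log\cn{A-A}/\log(1/\delta)$ and $\sigma_\times$, with the scale split $\rho=\delta^{t}$ as a free parameter. Optimising over $t$ and over the uniformity profile of $A$, with the restriction $s\le1/2$ ensuring the incidence theorem applies in its sharp range, should yield $\max(\sigma_+,\sigma_\times)\ge 33s/26$. The main obstacle is Step 2, specifically identifying the right combination of the incidence theorem with the discrete sum-product energy bound so that the numerology produces exactly $33/26$. I would first try the Ren--Wang Furstenberg-type bound applied at two scales, with a Balog--Szemerédi--Gowers refinement to pass from energy to the set $A-A$.
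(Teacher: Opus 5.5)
There is a genuine gap: nothing in your plan gets past the Elekes bound. Write $N_\delta(A-A)=\delta^{-\tau}$, and let $\upsilon$ be such that $N_\rho(AA)\le\rho^{-\upsilon}$ for $\delta<\rho<\delta^{O(\epsilon)}$. The incidence count in your Step 2 (tubes around $y=b(x+a)$, points in $(A-A)\times AA$) is exactly the argument giving $\max\{\tau,\upsilon\}\ge 5s/4$. Everything after it never produces a concrete inequality. That includes refining $A$ by an unspecified discrete sum-product input, running the incidence bound at scales $\rho$ and $\delta/\rho$ and multiplying, and optimising over the split. So there is nothing to optimise and no reason the outcome would be $33/26$, as you acknowledge.

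Two supporting claims are also off. First, the hypothesis $N_\rho(AA)\le\rho^{-33s/26}$ at all intermediate scales is an upper bound on covering numbers, not a Katz--Tao non-concentration condition on $AA$. Its role is to transfer the Furstenberg-type bound $\mathcal{H}^{5s/2}_{[\delta,\infty)}(D_{\mathrm{pop}}\times AA)\gtrsim\delta^{O(\epsilon)}$ from the product to the factor, giving $\mathcal{H}^{5s/2-\upsilon}_{[\delta,\infty)}(D_{\mathrm{pop}})\gtrsim\delta^{O(\epsilon)}$. Here $D_{\mathrm{pop}}$ is a uniform set of popular differences. Second, the advantage of $A-A$ over $A+A$ is not a Pl\"unnecke--Ruzsa effect. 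It comes from the identity $(r-a_1)-(r-a_2)=a_2-a_1$, in which all three terms are differences. For sums one has to go through $\mathrm{E}_{2,\delta}(R_1,R_2)\gtrsim\delta^{O(\epsilon)}\#A^4/N_\delta(A+A)$ and a $7/4$-energy interpolation, which is what degrades the exponent to $29/23$.

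The missing idea is to use the Elekes/Furstenberg step only as an auxiliary input. The main engine is an energy argument from the discrete sum-product literature, with the Szemer\'edi--Trotter-strength incidence bound for quasi-product tube families playing the role of Szemer\'edi--Trotter. It has three ingredients.

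(i) Choose $D_{\mathrm{pop}}$ $\epsilon$-uniform and carrying $\gtrsim\delta^{\epsilon}\#A^2$ pairs, and let $\sigma$ be its dimension. Then $D_{\mathrm{pop}}$ is Katz--Tao with constant $\approx\delta^{\sigma}\#D_{\mathrm{pop}}$, and $\sigma\ge 5s/2-\upsilon$ by the content bound above.

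(ii) Bound third energies by incidences between the points $AA\times C$ and the tubes $y=x/a-b$, $(a,b)\in A\times B$, shaded by affine copies of $A$. This gives $\mathrm{E}_{3,\delta}(A)\lessapprox N_\delta(AA)^2\#A$ and $\mathrm{E}_{3,\delta}(A,D_{\mathrm{pop}})\lessapprox\delta^{\sigma}N_\delta(AA)^2\#D_{\mathrm{pop}}^3/\#A$. The shading hypothesis needs $2s\le 2-\sigma$, which is where $s\le 1/2$ enters.

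(iii) Count triples $(r,a_1,a_2)$ with $r-a_1$ and $r-a_2$ near $D_{\mathrm{pop}}$, and $a_2-a_1$ near a weaker popular set $P$ that captures almost all pairs of $A\times A$. Applying Cauchy--Schwarz to $(r,a_1,a_2)\mapsto(r-a_1,r-a_2)$ gives $\#A^{15/2}\lessapprox\mathrm{E}_{3,\delta}(A)\,\mathrm{E}_{3,\delta}(A,D_{\mathrm{pop}})^{1/2}N_\delta(A-A)\,\#D_{\mathrm{pop}}^{1/2}$.

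Combining these yields $\delta^{-\sigma}\#A^{14}\lessapprox N_\delta(A-A)^6N_\delta(AA)^6$. Inserting $\sigma\ge 5s/2-\upsilon$ gives $33s\le 12\tau+14\upsilon$, hence $\max\{\tau,\upsilon\}\ge 33s/26-O(\epsilon)$. None of (i)--(iii) is present in your plan, and they are exactly what produce the exponent.
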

    Note that $29/23 = 1.26086...,$ and $33/26 = 1.26923...$ 
    
The main key ingredient is the recent Sz\'emeredi--Trotter strength incidence inequality for balls and tubes.
    \begin{theorem}
		\label{maininct}\label{thm.product}
		Let $0<s\leq d \leq 1$. Set $\sigma = \min \{s+d,2-s-d\}.$
		Assume $\cT$ is a $(\delta,s,d,K_1,K_2)$-quasi-product set. Consider a shading $Y$ of $\cT$ such that $Y(T)$ is a $(\delta,\sigma,K_3)$-KT set for each $T\in\cT$. Then  we have
		$$\cI(\cT,Y) \les K_3^{\frac13}(K_1K_2)^{2/3} (\delta^{-s-d}\#\cT)^{1/{3}}\# Y(\cT)^{2/3}.$$	
	\end{theorem}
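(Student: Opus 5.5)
The plan is to reduce the estimate to a Szemer\'edi--Trotter type incidence bound in which the quasi-product structure of $\cT$ plays the role of a two-dimensional Frostman condition on the tube family. First I pigeonhole dyadically so that every ball $B \in Y(\cT)$ lies in $\sim M$ of the sets $Y(T)$, $T \in \cT$. This loses only $\lessapprox 1$ factors, and it gives $\cI(\cT,Y) \approx M\,\#Y(\cT)$. Substituting into the claimed bound, the statement becomes equivalent to
\[
M^3\,\#Y(\cT) \lessapprox K_3 (K_1K_2)^2\,\delta^{-s-d}\,\#\cT ,
\]
which is the form I will aim for. The heuristic behind it is that a quasi-product family with parameters $(s,d)$ behaves like a $(\delta, s+d)$-set of tubes in the $2$-dimensional space of lines. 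The factor $\delta^{-s-d}\#\cT$ therefore measures how far $\cT$ is from being a full $(\delta,s+d)$-Frostman family.

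The core step is a high--low / induction-on-scales argument in the style of Fu--Ren, applied to $M$-rich balls. I fix an intermediate scale $\Delta = \delta M^{-1}\cdot(\text{normalising factors})$, chosen so that two tubes through a common ball make an angle $\gtrsim \delta/\Delta$ for most pairs. The argument then splits into a high case and a low case.
\begin{enumerate}
\item In the \emph{high case}, $L^2$ orthogonality (or just Cauchy--Schwarz on pairs of tubes meeting in a ball) controls the incidences. The number of pairs of tubes through a ball is $M^2$, and the quasi-product condition bounds how many tubes can lie in a $\rho$-neighbourhood of a given tube direction and position. This is where $K_1,K_2$ enter: they control $\#\{T' : T' \subset T^{\rho}\}$ by $K_1K_2 (\rho/\delta)^{s+d}$-type bounds coming separately from the $s$-dimensional direction set and the $d$-dimensional fibres.
\item In the \emph{low case}, I thicken everything to scale $\Delta$. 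The thickened configuration is again a quasi-product family with the same exponents, so induction on scales applies. The $(\delta,\sigma,K_3)$-KT condition on each shading $Y(T)$ guarantees that the shadings at scale $\Delta$ still have the right size, contributing the $K_3^{1/3}$.
\end{enumerate}
Optimising $\Delta$ balances the two cases and gives exactly the exponents $1/3,2/3$.

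The choice $\sigma=\min\{s+d,2-s-d\}$ reflects point--line duality. When $s+d\le 1$ the KT condition on the shadings at dimension $s+d$ matches the tube dimension, so the two-ends and Frostman hypotheses are symmetric under duality. When $s+d>1$ I instead use the dual Furstenberg estimate of Ren--Wang, in which the roles of balls and tubes are exchanged and the relevant dimension becomes $2-s-d$. In that range I would run the argument on the dual configuration, where balls become tubes and tubes become balls. The quasi-product hypothesis then transfers to a Frostman-type condition on the (now) balls.

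I expect the main obstacle to be the low case in the regime $s+d>1$. There the rescaled configuration at scale $\Delta$ must again satisfy all hypotheses with constants that do not compound over the $O(\log(1/\delta))$ induction steps. To handle this I would first pass to a refinement in which the quasi-product structure is uniform across scales, pigeonholing a branching function so that the constants $K_1,K_2$ are inherited at every scale up to $\delta^{-\e}$ losses. I would then invoke the Ren--Wang Furstenberg bound as a black box for the coarse-scale step, rather than re-running the induction.
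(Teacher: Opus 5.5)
This theorem is not proved in the paper. It is quoted without proof from the recent incidence literature (cf.\ \cite{doprod,demwangszem}), so your sketch has to stand on its own, and as written it leaves out the step that actually produces the bound. Your reduction to $M$-rich balls is fine: restricting the shading to balls of multiplicity $\sim M$ preserves the $(\de,\sigma,K_3)$-KT property. The target is therefore $M^3\#Y(\cT)\les K_3(K_1K_2)^2\de^{-s-d}\#\cT$.

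The only estimate you actually describe is the $L^2$ count of pairs of tubes through a ball, and by itself it is one power of $M$ short. Two tubes at angle $\sim\theta$ meet inside a ball of radius $\sim\de/\theta$, so $\#(Y(T)\cap Y(T'))\le K_3\theta^{-\sigma}$. Also, at most $\lesssim K_1K_2(\theta/\de)^{s+d}$ tubes of $\cT$ meet $T$ at angle $\lesssim\theta$. Since $\sigma\le s+d$, summing over dyadic $\theta$ gives $\sum_B M_B^2\les K_1K_2K_3\de^{-s-d}\#\cT$, i.e.\ only $M^2\#Y(\cT)\les K_1K_2K_3\de^{-s-d}\#\cT$. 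The extra factor of $M$ would have to come from the low case and the choice of $\Delta$, and ``optimising $\Delta$'' does not supply it. As written, $\Delta=\de M^{-1}<\de$ and $\de/\Delta=M$ is not an angle, so even the split is unspecified.

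To see what is hidden there, set $t=s+d$ and apply the statement to a maximal quasi-product family ($\#\cT\approx\de^{-t}$) whose shadings are $(\de,\sigma')$-sets of size $\approx\de^{-\sigma'}$, with $\sigma'\le\sigma$. It gives $\#Y(\cT)\gtrsim\de^{-(3\sigma'+t)/2}$. This is the Szemer\'edi--Trotter branch of the Ren--Wang bound $\min\{\sigma'+t,(3\sigma'+t)/2,\sigma'+1\}$, and $\sigma'\le\min\{t,2-t\}$ is exactly the range where that branch is the minimum. Beyond it the inequality fails; for instance, when $\sigma'>2-t$, shadings of $C\times[0,1]$ with $\dim C=\sigma'$ violate it. That is where $\sigma=\min\{s+d,2-s-d\}$ comes from, not point--line duality. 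Dualising turns the quasi-product tubes into a product-like set of balls, and the KT shadings into a KT condition on the directions of the shaded tubes through each ball. The exponents stay the same; you do not get a problem of dimension $2-s-d$.

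The low case, as you state it, is false. Thickening to scale $\Delta$ does not return a quasi-product family with the same exponents and comparable constants. For example, $\de^{s}\Z\cap[0,1]$ is a $(\de,s,O(1))$-KT set, yet at scale $\Delta=\de^{s}$ it fills every $\Delta$-interval and is only a $(\Delta,s,\Delta^{s-1})$-KT set. The same happens to the shadings, so the $(\de,\sigma,K_3)$-KT hypothesis does not pass to scale $\Delta$ either. In any case it bounds shadings from above, not from below, so it cannot guarantee they ``still have the right size''. Uniformising via a branching function makes the counts regular, but the exponents then vary from scale to scale. The coarse problem is then not an instance of the statement you are inducting on.

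Finally, ``invoke Ren--Wang as a black box'' is exactly the missing argument, not a routine step. Their theorem bounds from below the size of a union of Frostman-type shadings over a Frostman-type family of tubes. You instead need an upper bound on incidences for KT families with arbitrary $\#\cT$, arbitrary shading sizes, and the explicit dependence $K_3^{1/3}(K_1K_2)^{2/3}$. Converting the former into the latter, uniformly in all these parameters, is the substance of the theorem, and the proposal does not indicate how to do it.
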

    Here a shading $Y$ is an assignment of a distinct collection of $\de$-balls to each $T \in \cT.$ We write
    $$Y(\cT) = \bigcup_{T \in \cT} Y(T).$$
    We write
    $$\cI(\cT,Y) = \sum_{T \in \cT}\sum_{B \in Y(\cT)} 1_{B \cap T \neq \emptyset}1_{B \in Y(T)}.$$
    The method to obtain Theorem \ref{thm.dis} is quite different from the one in \cite{doprod}, although both rely on Theorem \ref{thm.product}.
Here, we combine ideas present in sum-product literature in the discrete case.
We sketch the main ideas below, focusing on the difference-set case. The sum-set case follows
using standard modifications.

We define the $\de$-\textit{discretised $3$-energy} of two sets $A,B$ by
\begin{equation}
	\en 3 \de (A,B) = \sum_{z \in \de \Z} \Big(\sum_{(a,b) \in A \times B} 1_{|a-b -z| \leq \de}\Big)^3.
\end{equation}
If $A=B$ then we write $\en 3\de (A) = \en 3 \de (A,A).$ 
We let $t \geq s$ be the `discrete Hausdorff dimension' of $A-A.$ 
The strategy can be broken into two distinct parts.
First, we use Theorem \ref{maininct} to upper bound $\en 3 \de (A,A-A)$ and $\en 3 \de (A)$ in terms of $$\# A, \cn {A-A}, \cn{AA} \text{ and } \de^{-t}.$$
Second, we find a lower bound for some multiplicative combination of \(\cn{A-A} \), \( \cn{AA}\), \( \en 3 \de(A)\), and \(\en 3 \de (A,A-A)\)
using a Cauchy-Schwarz argument. This framework is present in the discrete sum-product literature,
where Theorem \ref{thm.product} is now playing the role of the Szemeredi-Trotter theorem. See,
for example, \cite{schoenshkrsumsets}, \cite{rud}, \cite{bloom}, and \cite{csumproduct}.

For the lower bound, we follow \cite{csumproduct} quite closely.
The idea is to produce many solutions to the equation \(p_1 - p_2 = p_3 + O(\delta)\), where the \(p_{i}\)
lie in  a `popular' separated subset of \(A-A\). However, a difficulty arises in our applications
as one can not use the natural definition for the popular set. Indeed, 
the energy estimates obtained using Theorem \ref{thm.product} require the popular set
to be more refined.
To handle this, we introduce two notions of popular set. We have a refined set \(D_{\text{pop}} \)
which is suitable for our estimates obtained using Theorem \ref{thm.product}, and a weaker set \(P\),
which is chosen so that \(\pi ^{-1} (P)\) captures almost all pairs \(A \times A\), where \(\pi(x,y) = x-y\) is
the difference map. We note that \(P\) is not
fit for use in Theorem \ref{thm.product}, but since we never apply energy estimates to \(P\), this does not
prove to be an obstacle. 
We produce solutions
to the equation \(p_1 - p_2 = p_3 + O(\delta)\) where \(p_1,p_2 \in D_{\text{pop}} \) and \(p_3 \in  P\).
Our lower bound, Lemma \ref{lem.energybound1}, follows from a few applications of the H\"older inequality.
\subsection*{Acknowledgements} We thank Shukun Wu for introducing us during WOR's visit to the Univerisity of Indiana, Bloomington. WOR thanks Shukun Wu and the Department of Mathematics for their hospitality, Ciprian Demeter for comments on an earlier version of this note, and Pablo Shmerkin and Joshua Zahl for support.

	\section{Preliminaries}
    For $A \subset \R^d$ let $\dy\de A$ denote the collection of $\de$-dyadic squares which intersect $A.$ We let $N_\de(A) = \# \dy \de A.$ We use $\cN_\de(A)$ to denote the $\de$-neighbourhood of $A.$ 
	\begin{definition}
		Let $0 \leq s \leq d, C, \delta >0.$ We say that $A \subset \R^d$ is a $(\delta,s,C)$-\textit{set} if 
		\begin{equation}
			\cn{A \cap B(x,r)} \leq Cr^s\cn{A}\text{ for all } x \in A, r \geq \delta.
		\end{equation}
		We say that $A \subset \R^d$ is a $(\delta,s,C)$-\textit{KT-set} if 
		\begin{equation}
			\cn{A \cap B(x,r)} \leq C(r/\delta)^s \text{ for all } x \in A, r \geq \delta.
		\end{equation}
		If $C \approx 1$ then we will often drop the $C$ from the notation and simply refer to $(\delta,s)$-sets and $(\de,s)$-KT-sets.
	\end{definition}
    
\begin{defn}\label{KK}
		Given $K_1,K_2\ge 1$ and $s,d\in[0,1]$ we call a  collection $\cT$ of $\delta$-tubes a $(\delta,s,d,K_1,K_2)$-quasi-product set if its direction set $\Lambda$ is a $(\delta,s,K_1)$-KT set and for each $\theta\in\Lambda$, $\cT_\theta$ is a $(\delta,d,K_2)$-KT set.
        \end{defn}

The below definition is standard, and can be found at \cite[Definition 2.3]{demwangszem}, for example.
	\begin{definition}\label{def.uniform}
		Let $\epsilon >0.$ Let $T_\epsilon$ satisfy $T_\epsilon^{-1}\log (2T_\epsilon) = \epsilon.$ Given $0 < \delta \leq 2^{-T_\epsilon},$ let $m$ be the largest integer such that $mT_\epsilon \leq \log (1/\delta).$ Set $T := \log(1/\delta)/m.$ Note that $\delta = 2^{-mT}.$ We say that a $\delta$-separated $A \subset \R^d$ is \textit{$\e$-uniform} if for each $\rho = 2^{-jT}, 0 \leq j \leq m$ and each $P,Q \in \cD_\rho(A),$ we have
		\begin{equation}
			\#(A \cap P) \sim \#(A \cap Q).
		\end{equation}
		A similar definition may be given if $A$ is a collection of $\delta$-balls.
	\end{definition}
	Below is \cite[Lemma 2.4]{demwangszem}.
	\begin{lemma}
		Let $\epsilon > 0.$ There exists $C_\e >0 $ so that the following holds. Let $A \subset \R^d$ be a $\delta$-separated uniform set. For each $\delta \leq \rho \leq 1$ and each $P,Q \in \cD_\rho(A)$ we have 
		\begin{equation}
			C_\epsilon^{-1}\#(A \cap P) \leq \#(A \cap Q) \leq C_\e \#(A \cap P).
		\end{equation}
	\end{lemma}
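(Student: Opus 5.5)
This lemma says that uniformity at the lattice scales $2^{-jT}$ propagates, up to a constant $C_\e$, to every scale $\de\le\rho\le1$. The plan is to interpolate: sandwich a $\rho$-cell between the lattice scales directly above and below $\rho$, and use that two consecutive lattice scales differ only by the bounded factor $2^{T}$ (with $T\le 2T_\e$ depending only on $\e$).

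\emph{Setup.} Fix $\de\le\rho\le1$ and choose $j$ with $2^{-(j+1)T}<\rho\le 2^{-jT}$. Write $\rho_0=2^{-jT}$ and $\rho_1=2^{-(j+1)T}$, and assume $\rho$ is dyadic, as the notation $\cD_\rho$ suggests. Definition \ref{def.uniform} gives constants $M_j,M_{j+1}$ such that $\#(A\cap P')\sim M_j$ for every $P'\in\cD_{\rho_0}(A)$ and $\#(A\cap Q')\sim M_{j+1}$ for every $Q'\in\cD_{\rho_1}(A)$. Here $\sim$ hides an absolute constant, or at worst one depending on $\e$.

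\emph{Upper bound.} Let $P\in\cD_\rho(A)$. By dyadic nesting, $P$ lies in a unique $P'\in\cD_{\rho_0}$, which meets $A$ because $P$ does. Hence $\#(A\cap P)\le \#(A\cap P')\lesssim M_j$.

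\emph{Lower bound.} Now let $Q\in\cD_\rho(A)$. Since $\rho_1<\rho$, the cube $Q$ is a union of $\rho_1$-dyadic cubes, and at least one of them meets $A$. Therefore $\#(A\cap Q)\gtrsim M_{j+1}$.

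\emph{Comparing the two lattice scales.} It remains to show $M_j\lesssim_\e M_{j+1}$. Each $P'\in\cD_{\rho_0}(A)$ is the union of at most $(\rho_0/\rho_1)^d=2^{Td}$ cubes of $\cD_{\rho_1}$, so $M_j\lesssim 2^{Td}M_{j+1}$. We have $T\le 2T_\e$, because $m$ is maximal and so $(m+1)T_\e>\log(1/\de)$, giving $T=\log(1/\de)/m\le T_\e(m+1)/m\le 2T_\e$. Combining the three bounds yields $\#(A\cap P)\le C_\e\,\#(A\cap Q)$ with $C_\e\approx 2^{2dT_\e}$ times the implicit constants. Swapping the roles of $P$ and $Q$ gives the other inequality.

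\emph{Where care is needed.} The main delicate point is the bookkeeping rather than any real difficulty. First, the implicit constant in $\sim$ from Definition \ref{def.uniform} must be independent of $\de$. Second, $T$ must be bounded in terms of $\e$ alone, which is exactly why the definition chooses $m$ maximal. Third, the endpoint $\rho=\de$ needs $\rho_1$ to be read as $\rho_0=\de$ itself; there $\#(A\cap P)\sim1$ trivially, because $A$ is $\de$-separated and so each $\de$-cube contains $\lesssim_d1$ points. Non-dyadic $\rho$ are handled by rounding to the nearest dyadic scale, which costs only a factor $2^d$.
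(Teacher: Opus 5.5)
Your sandwich argument is correct and is the standard one. The paper gives no proof of its own and only cites Demeter--Wang. Trapping $\rho$ between consecutive scales $2^{-(j+1)T}<\rho\le 2^{-jT}$, and using that the ratio $2^{T}\le 2^{2T_\e}$ depends only on $\e$ (and $d$), is the natural argument behind that lemma.

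Two things should be tightened.

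First, three of your steps need the scales $2^{-jT}$ to be dyadic and nested with one another and with $\rho$: ``$P$ lies in a unique $P'\in\cD_{\rho_0}$'', ``$Q$ is a union of $\rho_1$-dyadic cubes'', and ``$P'$ is the union of $2^{Td}$ cubes of $\cD_{\rho_1}$''. In other words they need $T\in\mathbb{N}$, which is Orponen--Shmerkin's convention. With $T=\log(1/\de)/m$ as literally transcribed, $T$ need not be an integer. You should state the convention explicitly, or take an integer $T\in[T_\e,2T_\e]$ and let the last block be shorter, rather than use it silently. Without alignment the lower-bound step genuinely breaks; see the example below.

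Second, your closing sentence about non-dyadic $\rho$ is wrong for the same reason. For cells of the grid $\rho\Z^d$ the upper bound does survive at a cost of $2^d$, but the lower bound does not, because a misaligned cell can clip a single point off a full cluster. For example, take $T\in\mathbb{N}$ and $\rho_1=2^{-T}$, and let $A$ be the $\de$-grid points in $[0,\rho_1)\cup[1/2,1/2+\rho_1)$. This set is uniform at every scale $2^{-jT}$. Yet for $\rho=1/2+\rho_1-\de$, the cell $[0,\rho)$ contains $2\rho_1/\de-1$ points while $[\rho,2\rho)$ contains exactly one. So the lemma should simply be read for dyadic $\rho$, which is what $\cD_\rho$ means in the paper, and then there is nothing extra to handle.
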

	Below is \cite[Lemma 2.15]{orpshabc}.
	\begin{lemma}\label{lem.unifsubset}
		Let $\epsilon > 0$ and suppose that $\delta >0 $ is small enough in terms of $\e.$ Let $A \subset \R^d$ be a $\delta$-separated set. Then there is an $\e$-uniform $A_0 \subset A$ with $\#A_0 \gtrsim \delta^{\epsilon}\#A.$ 
	\end{lemma}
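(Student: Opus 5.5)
The plan is a dyadic pigeonholing ("uniformization") argument. Let $\delta=2^{-mT}$ with $T$ and $m$ as in Definition \ref{def.uniform}. We build $A_0$ by pruning $A$ scale by scale, from the finest scale up to the coarsest, so that each pruning loses only a factor $\sim 1/\log(1/\delta)$ or so. Over the $m\approx \epsilon\log(1/\delta)/\log(2T_\epsilon)$ scales the total loss is then $\gtrsim \delta^{\epsilon}$. This is the choice $T_\epsilon^{-1}\log(2T_\epsilon)=\epsilon$ is designed for.

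Concretely, set $A_m:=A$, viewed at scale $\rho_m=\delta$, where each $\delta$-cube contains $\sim 1$ point after a harmless reduction to one point per dyadic $\delta$-cube. Given $A_{j}$, consider the cubes $Q\in\cD_{\rho_{j-1}}(A_{j})$, where $\rho_{j-1}=2^{-(j-1)T}$. Each such $Q$ contains at most $2^{dT}$ children in $\cD_{\rho_j}(A_j)$. By the inductive hypothesis every child contains a comparable number of points, so the count $\#(A_j\cap Q)$ is determined up to a constant by the number of children of $Q$, an integer in $[1,2^{dT}]$. Pigeonhole this number into dyadic ranges $[2^k,2^{k+1})$, $0\le k\le dT$. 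One range carries at least a $\gtrsim 1/(dT+1)$ fraction of the points of $A_j$. Keep only the cubes $Q$ in that range, and call the resulting set $A_{j-1}$.

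Two things must be checked. First, $\#A_{j-1}\gtrsim T^{-1}\#A_j$. Second, uniformity at the finer scales $\rho_j,\dots,\rho_m$ is preserved. The second holds because we delete whole $\rho_{j-1}$-cubes, so the surviving finer cubes are untouched. At the new scale $\rho_{j-1}$, all surviving cubes have children-counts within a factor $2$ and children of comparable mass, hence comparable mass. After all $m$ steps we obtain $A_0$ that is uniform at every $\rho=2^{-jT}$.

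The step I expect to need care is the bookkeeping of constants. Each step multiplies the comparability constant in "$\sim$" by a factor, for example $2$. If these compounded across scales the constant would be $2^m$, which is not $O(1)$. The fix is to state uniformity as in Definition \ref{def.uniform}: at each scale, the surviving cubes have numbers of children within a factor $2$ of each other. Then the mass of a $\rho_j$-cube is the product over the finer levels of the children counts, and this product is the same up to a factor $2$ per level for all cubes. I would therefore run the induction directly on children counts, requiring them to lie in a fixed dyadic range $[2^{k_j},2^{k_j+1})$ at each level. This makes the uniformity statement clean at each scale. Equivalently, the interpretation "$\sim$ up to $C_\epsilon$ after the comparison lemma" is what is needed downstream.

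Finally, the loss is $\#A_0\ge \prod_j (C(dT+1))^{-1}\#A\ge (C dT)^{-m}\#A$. Since $m\le \log(1/\delta)/T$, we get $(CdT)^{-m}\ge \delta^{\log(CdT)/T}$. This is $\ge \delta^{O(\epsilon)}$ by the definition of $T_\epsilon$, because $T\ge T_\epsilon$ and $\log(2T)/T$ is decreasing. Replacing $\epsilon$ by $\epsilon/C'$ at the start, with $\delta$ small enough in terms of $\epsilon$ to absorb $d$ and the constants, yields $\#A_0\gtrsim\delta^{\epsilon}\#A$.
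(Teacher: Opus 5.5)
Your fine-to-coarse pigeonholing scheme is the standard one; the paper does not prove this lemma but quotes it from \cite[Lemma 2.15]{orpshabc}. There is, however, a genuine gap at exactly the step you flagged, and your proposed fix does not close it. Definition \ref{def.uniform} asks that the \emph{masses} $\#(A\cap P)$, $P\in\mathcal{D}_\rho(A)$, be comparable up to an absolute constant at every grid scale $\rho=2^{-jT}$. If you only know that every surviving level-$i$ cube has between $2^{k_i}$ and $2^{k_i+1}$ children, then the mass of a $\rho_j$-cube is pinned down only to $[\prod_{i>j}2^{k_i},\prod_{i>j}2^{k_i+1})$, a range of ratio $2^{m-j}$. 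Nothing in your construction prevents this ratio from being nearly attained (one lineage with minimal counts at every level, another with maximal). At the coarse scales the ratio is about $2^m=\delta^{-1/T}$, which tends to infinity as $\delta\to0$ with $\epsilon$ fixed. Your bookkeeping paragraph re-derives exactly this factor-$2$-per-level product and then calls it ``clean''. That silently replaces the definition by one about children counts. Nor is it ``equivalent'' to comparability up to $C_\epsilon$, since $2^m$ is not bounded in terms of $\epsilon$. It also cannot be fed into the comparison lemma, which needs $O(1)$ comparability at the grid scales. What you actually prove is comparability up to $\delta^{-1/T}\le\delta^{-\epsilon}$. That would suffice for every use in this paper, but it is not the stated lemma.

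The missing step is a trimming. After choosing the class $[2^{k},2^{k+1})$ at level $j-1$, delete children (with everything inside them) from each surviving parent until it has exactly $2^{k}$ children. This costs at most one further factor $2$ per level, so $\#A_0\gtrsim_d (2(dT+1))^{-m}\#A$. Removing whole child cubes leaves all finer scales untouched. By induction, after your one-point-per-$\delta$-cube reduction, every surviving $\rho_j$-cube then contains exactly $\prod_{i>j}2^{k_i}$ points. So masses are \emph{equal} at every grid scale and nothing accumulates; this is why uniformity in the cited source is phrased with exact counts. Alternatively, pigeonhole the masses $\#(A_j\cap Q)$ directly rather than the children counts. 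Since children masses are already within a factor $2$ by induction, parent masses fall into $dT+1$ dyadic classes, and selecting one gives factor-$2$ comparability at that scale without accumulation. A smaller point: ``replacing $\epsilon$ by $\epsilon/C'$'' changes the grid $T$ in Definition \ref{def.uniform}, so it yields $\epsilon/C'$-uniformity, not $\epsilon$-uniformity. It is cleaner to keep the $\epsilon$-grid and note that $(2(dT+1))^{-m}=\delta^{\log_2(2(dT+1))/T}\ge\delta^{\epsilon+O_d(1/T)}$, i.e.\ $\delta^{O(\epsilon)}$, which is the form in which the lemma is used.
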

Below is \cite[Lemma 2.9]{demwangszem}.
    \begin{lemma}\label{lem.partition}
    Let $\epsilon > 0$ and suppose that $\delta >0 $ is small enough in terms of $\e.$ Let $A \subset \R^d$ be a $\delta$-separated set. Then we may partition $A$ into $\lesssim \demm$ disjoint $\e$-uniform subsets. 
    \end{lemma}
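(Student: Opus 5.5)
The plan is to deduce the partition from Lemma \ref{lem.unifsubset} by a greedy exhaustion argument, with a slightly smaller parameter to absorb the logarithmic losses. Fix $\e>0$ and set $\e' = \e/2$. Take $\de$ small enough that Lemma \ref{lem.unifsubset} applies with parameter $\e'$ (and hence with any smaller parameter, since the scales in Definition \ref{def.uniform} depend only on $\e'$ and $\de$). Apply it to $\delta$-separated subsets of $A$ of any cardinality.

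\textbf{Construction.} Set $A^{(0)} = A$. Given $A^{(k)} \neq \emptyset$, apply Lemma \ref{lem.unifsubset} with parameter $\e'$ to the $\de$-separated set $A^{(k)}$. This produces an $\e'$-uniform subset $A_k \subset A^{(k)}$ with $\#A_k \geq c\,\de^{\e'}\#A^{(k)}$, where $c>0$ is an absolute constant. Then put $A^{(k+1)} = A^{(k)} \setminus A_k$. The sets $A_0, A_1, \dots$ are disjoint and each is $\e'$-uniform.

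\textbf{Counting.} From $\#A^{(k+1)} \le (1 - c\de^{\e'})\#A^{(k)}$ we get
$$\#A^{(k)} \le (1-c\de^{\e'})^k \#A \le e^{-ck\de^{\e'}}\#A.$$
We may assume $A \subset [0,1]^d$ or a bounded region, as in all applications; otherwise decompose into unit cubes first. Then $\#A \lesssim \de^{-d}$, so $\#A^{(k)} < 1$, i.e. $A^{(k)} = \emptyset$, once
$$k \gtrsim \de^{-\e'} d\log(1/\de).$$
Hence the process stops after at most $\lesssim \de^{-\e'}\log(1/\de) \le \de^{-2\e'} = \de^{-\e}$ steps, for $\de$ small. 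The resulting pieces partition $A$.

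\textbf{Main point to check.} The only delicate step is matching uniformity parameters. The pieces are $\e'$-uniform rather than $\e$-uniform, and the scale sequence $2^{-jT}$ in Definition \ref{def.uniform} depends on the parameter. I would handle this in one of two ways. The first option is to note that the statement's constant is only meaningful up to renaming $\e$: an $\e/2$-uniform set is what the applications need, and the count $\lesssim \de^{-\e}$ is what we proved. The second option is to run the argument with $\e$ itself, obtaining $\lesssim \de^{-\e}\log(1/\de) = \de^{-\e-o(1)}$ pieces, and absorb the logarithm into the implicit constant convention $\les$. I would take the first route, since it is the cleaner one.
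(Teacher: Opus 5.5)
The paper gives no proof of this lemma; it is quoted from \cite[Lemma 2.9]{demwangszem}, so there is no in-paper argument to compare with. Your greedy exhaustion through Lemma \ref{lem.unifsubset} is a reasonable route, and the counting is correct for bounded $A$. You should simply assume $A$ is bounded: cutting an unbounded $A$ into unit cubes does not help, since it produces uncontrollably many pieces.

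The gap is the last step, which you flag but do not close. Your pieces are $\e/2$-uniform, i.e.\ uniform along the scales $2^{-jT'}$ built from $T_{\e/2}$. Definition \ref{def.uniform} for $\e$ instead asks for uniformity along the scales $2^{-jT}$ built from $T_\e$, which are different and in general not even nested. Neither of your two options proves the statement as written. Renaming $\e/2$ as $\e$ gives $\lesssim \de^{-2\e}$ pieces. Running with $\e$ itself gives $\demm\log(1/\de)$ pieces, which is not $\lesssim\demm$ (the statement has $\lesssim$, not $\les$). And ``the applications only need $\e/2$-uniformity'' is a remark about the applications, not a proof of the lemma.

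The missing observation is already in the paper: the comparability lemma stated right after Definition \ref{def.uniform} (\cite[Lemma 2.4]{demwangszem}). It says that for a uniform set, $\#(A\cap P)$ is comparable, up to a constant $C_{\e/2}$, over all $P\in\cD_\rho(A)$ at \emph{every} scale $\de\le\rho\le 1$. To see this, sandwich $\rho$ between consecutive scales $2^{-(j+1)T'}<\rho\le 2^{-jT'}$; the ratio is $\lesssim 2^{dT'}$. Evaluating at $\rho=2^{-jT}$ shows that each of your $\e/2$-uniform pieces is $\e$-uniform, with the $\sim$ of Definition \ref{def.uniform} now hiding a constant depending only on $\e$. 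This is harmless and is all the paper ever uses, e.g.\ to make $\pop$ a $(\de,\sigma,\dem)$-set. With that one sentence added, your first route proves the lemma with $\lesssim\demm$ pieces.

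If you want $\sim$ to remain an absolute constant, drop the greedy scheme and partition $A$ directly. Go from the finest scale $2^{-mT}$ up to scale $1$. Inside each $2^{-(j-1)T}$-cube, split the $2^{-jT}$-children carrying a given finer-scale label into groups whose sizes are the distinct powers of $2$ in the binary expansion of their number, and append that exponent to the label. Each label class then has exact branching numbers, so it is $\e$-uniform with absolute constants. There are at most $(\lfloor dT\rfloor+1)^m$ classes, which is $\le (2T)^m\le \demm$ when $d=1$, the case used in the paper. This avoids both the parameter change and the logarithmic loss.
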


\section{The difference-product case}\label{sect:difference-product}
	Theorem \ref{thm.dis} will follow from the more general \eqref{prob.generalthm} below. The product-set $AA$ may be easily replaced with $A/A$ with only superficial modifications. The reason we are only able to get the result for $A-A$ and not $A+A$ using this method, is due to the fact that Lemma \ref{lem.energybound1} requires the use of symmetry which is present in $A-A,$ but not $A+A.$
In section \ref{sec.sumprod} we will present modifications which will enable us obtain results for $A+A$ and $AA.$  
    
\hfill    

\begin{steps}
\step{Pre-processing}
Let $A \subset [1/2,1]$ be an $\e$-uniform $(\delta,s,\demm)$-set with $\# A < \demm \de^{-s}.$ This also means that $A$ is a $(\de,s,\de^{-2\e})$-KT-set. Let $D \subset A-A$ be $\de$-separated and satisfy $\# D = \cn {A-A}.$ Consider the collection of `popular' intervals
\begin{equation}\label{eq.defofpopularintervals}
    \cD_0 := \left\{I \in \dy \de D : \# \left(\pi^{-1}(I) \cap (A\times A)\right) \geq \# A^2/(100\# D) \right\}. 
\end{equation}
Here $\pi$ is the map $(x,y) \mapsto x-y.$ By Lemma \ref{lem.partition}, after writing $\cD_0$ as a union of disjoint $\e$-uniform sets, of which there are at most $\sim$ $\de^{-\e}, $ by pigeonholing, we find an $\e$-uniform $\cD \subset \cD_0$ so that 

$$\#\bigcup_{I \in \cD} (\pi^{-1}(I)  \cap (A \times A)) \gtrsim \depp \# A^2.$$

Set $\pop := D \cap  \cup \mathcal{D} $.

\step{Dimension of $\pop$}

Now let $\sigma \geq 0$ be the largest exponent so that
\begin{equation}\label{eq.defofsigma}
    \cns \rho \pop \geq \depp \rho^{-\sigma} \text{ for all } \delta \leq \rho \leq 1.
\end{equation}
Since $\pop$ is uniform it is a $(\de,\sigma,\dem)$-set, and therefore a $(\de,\sigma,\dem \de^\sigma \#\pop)$-KT-set. 

Let 

  $$R_A = \{x \in A : \#((x-A) \cap \mathcal{N} _{\delta} (D_{\text{pop}} )) \geq \depp \#A\}.$$
Since $\# R_A \gtrsim \depp \#A,$ choose any $x \in A$ so that 
  $$ \#((x-A) \cap \mathcal{N} _{\delta} (D_{\text{pop}} )) \geq \dep\#A.$$
Therefore $\sigma \geq s.$

The aim is to prove that
\begin{equation}\label{prob.generalthm}
    \dep \de^{-\sigma}\# A^{14} \lesssim \# D^{6} \cn{AA}^{6}.
\end{equation}

The inequality \eqref{prob.generalthm} recovers the (difference-product version of the) result of Ren--Wang \cite[Theorem 1.5]{renwang}
    $$\max\{\cn {A-A},\cn{AA}\} >  \de^{-\frac{5s}{4}+O(\e)}$$
    in the stated range of $s.$

    \step{Elekes' argument}

We recall two results from \cite{doprod}.
The below with $\pop$ replaced with $A+A,$ and $AA$ replaced with $Q$ may be found at \cite[Theorem 5.3]{doprod}. There, $Q$ played a similar role to $\pop$ but in $A/A$ instead of $A-A.$ The proof will be the same with superficial modifications, and so we omit it. This is because the parameter $\sigma$ (which is defined differently in \cite{doprod}) plays no role in this argument.
	\begin{proposition} 
    We have
		\begin{equation}
			\cont{5s/2}(\pop \times AA) \gtrsim \delta^{O(\e)}.
		\end{equation}
	
	\end{proposition}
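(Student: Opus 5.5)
We follow Elekes' argument, run at scale $\delta$ with Theorem \ref{maininct} in place of Szemer\'edi--Trotter; this is the route of \cite[Theorem 5.3]{doprod}. The goal is a Frostman-type bound $\cont{5s/2}(\pop\times AA)\gtrsim\dep$.

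\emph{Lines.} For $a\in A$ and $b\in A$ consider the line $\ell_{a,b}=\{(x,y): y=a(x+b)\}$, or a variant adapted to differences. For $c\in A$ with $c-(-b)$ in the difference structure, the point $(c-b', ac)$ lies on the appropriate line, with the first coordinate in $A-A$ and the second in $AA$. More precisely, for fixed $b$ we use the lines $y=a(x+b)$ with $x+b=c\in A$, so $x=c-b\in A-A$. We restrict $x$ to the $\delta$-neighbourhood of $\pop$ using the set $R_A$. Fix $b\in R_A$, or a popular choice of $b$, and take $c\in A$ with $c-b\in\cN_\delta(\pop)$. There are $\gtrsim\dep\#A$ such $c$, and each line $\ell_{a,b}$ then meets $\cN_\delta(\pop\times AA)$ in $\gtrsim\dep\#A$ $\delta$-separated points.

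\emph{Quasi-product structure.} The tubes $T_{a,b}$, the $\delta$-neighbourhoods of $\ell_{a,b}$ for $a\in A$ and $b$ in a $\dep$-dense subset of $R_A$, have slopes in $A$, which is a $(\delta,s,\dem)$-KT set. For a fixed slope the intercepts $ab$ range over a rescaled copy of a dense subset of $A$, again a KT set of dimension $s$. So $\cT$ is a $(\delta,s,s,\dem,\dem)$-quasi-product set with $\#\cT\approx\delta^{-2s}$. The shading on each tube is a bi-Lipschitz image of $\{c\in A: c-b\in\cN_\delta(\pop)\}$, a subset of $A$ of relative density $\dep$. It is therefore a $(\delta,s,\dem)$-KT set, hence a $(\delta,2s,\dem)$-KT set when $2s\le 1$, and $\sigma'=\min\{2s,2-2s\}=2s$ because $s\le1/2$. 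Theorem \ref{maininct} with $K_i=\dem$ then gives
\[
\dep\,\delta^{-3s}\lesssim\cI(\cT,Y)\les\dem\,(\delta^{-2s}\delta^{-2s})^{1/3}\,\#Y(\cT)^{2/3},
\]
so $\#Y(\cT)\gtrsim\dep\delta^{-5s/2}$.

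\emph{Frostman upgrade.} The main obstacle is to turn this cardinality bound into the content bound $\cont{5s/2}\gtrsim\dep$, which must hold against every cover by balls of radii $r\ge\delta$. Let $\{B(x_j,r_j)\}$ be a cover of $\pop\times AA$ with $\sum r_j^{5s/2}$ small. Pigeonhole a dyadic radius $r$ carrying a $\gtrsim(\log 1/\delta)^{-1}$ fraction of the incidences, as in \cite{doprod}. Then repeat the incidence count at scale $r$: use $r$-tubes, the uniformity of $A$ (which gives it the $(r,s)$ structure with $\#A\sim r^{-s}$ up to $\dem$ losses), and the $\e$-uniformity of $\cD$, which makes $\pop$ well distributed at every scale. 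This gives at least $\dep r^{-5s/2}$ distinct $r$-balls needed, contradicting $\sum r_j^{5s/2}\ll\dep$.

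The delicate points are two. First, the incidences must survive passage to scale $r$; this uses uniformity, not merely cardinality. Second, the parameter $\sigma$ never enters, consistent with the remark that it plays no role. All losses are $\delta^{-O(\e)}$, from the uniform/KT constants and the pigeonholing.
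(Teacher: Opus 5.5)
Your outline follows the route the paper intends. The paper omits the proof and defers to \cite[Theorem 5.3]{doprod}, an Elekes-type argument run through Theorem \ref{maininct}. Your single-scale computation is correct up to a sign: $R_A$ controls $b-c$, not $c-b$, so use the lines $y=a(b-x)$, which pass through $(b-c,ac)$.

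The gap is in the upgrade to content, which you rightly call the main step: its stated justification is false. Uniformity does not give $N_r(A)\approx r^{-s}$. A $\de^{s}$-spaced progression in $[1/2,1]$ is an $\e$-uniform $(\de,s,O(1))$-set with $\#A\approx\de^{-s}$, yet at $r=\de^{s}$ it has $N_r(A)\approx\de^{-s}$, a power of $\de^{-1}$ larger than $r^{-s}$. What you actually have is $N_r(A)\gtrsim\depp r^{-s}$, from the $(\de,s)$-set property, and, by uniformity, $N_r(A\cap B(x,\rho))\lesssim\dem\rho^{s}N_r(A)$. So at scale $r$ the slope set, the intercept sets and the fat-tube shadings are only $(r,s,K)$-KT with $K\approx\dem r^{s}N_r(A)$. (The shadings lie in the $O(r)$-neighbourhood of $\{(b_0-c,a_0c):c\in A\}$.) This $K$ can be a large power of $\de^{-1}$, so you cannot apply Theorem \ref{maininct} at scale $r$ with $\dem$ constants, as your sketch implicitly does.

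The repair is to carry $K$ through the argument.
\begin{itemize}
\item Pigeonhole $r$ and keep the $\gtrsim\dep\#A^2$ tubes that have $\gtrsim\dep\#A$ shading points in the radius-$r$ balls $\mathcal{B}_r$.
\item Each kept tube meets $\gtrsim\dep N_r(A)$ balls of $\mathcal{B}_r$, because an $r$-ball contains $\lesssim\#A/N_r(A)$ of its shading points.
\item The kept tubes occupy $\gtrsim\dep N_r(A)^2$ distinct $r$-tubes, because each $r$-tube contains $\lesssim(\#A/N_r(A))^2$ of them.
\end{itemize}
Theorem \ref{maininct} at scale $r$ then gives $\dep N_r(A)^{7/3}\les K^{5/3}r^{-2s/3}\#\mathcal{B}_r^{2/3}$. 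Hence $\#\mathcal{B}_r\gtrsim\dep N_r(A)r^{-3s/2}\geq\dep r^{-5s/2}$. The conclusion survives only because the KT losses cancel exactly against the gain in $N_r(A)$, and this has to be part of the argument.

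Finally, the $\e$-uniformity of $\cD$ that you invoke plays no role here. Only $\#\{c\in A:b-c\in\cN_\de(\pop)\}\geq\depp\#A$ and the uniformity of $A$ are used.
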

    Below is essentially Corollary \cite[Corollary 5.4]{doprod}, with $AA$ replacing $Q$ and $\pop$ replacing $A+A.$ Again, the proof involves only superficial changes, so we omit it.
	\begin{corollary}
		\label{cor.fursten} 
		Let $u$ be such that 
		\begin{equation}\label{eq.upperbox}
			\cns{\rho}{AA} < \rho^{-u} \text{ for all } \delta \leq \rho \leq \delta^{O(\epsilon)}.
		\end{equation}
		Then \begin{equation}
			\label{c nh fdvhuivju rtgtig}
      \cont{5s/2 - u}(D_{\text{pop}} ) \gtrsim \dep .
		\end{equation}

	\end{corollary}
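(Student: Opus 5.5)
We will deduce Corollary \ref{cor.fursten} from the preceding Proposition, which gives $\cont{5s/2}(\pop \times AA) \gtrsim \dep$, by a covering argument. The idea is that a product set cannot have large $5s/2$-dimensional content if one factor, $AA$, is small at every scale in the range $[\delta,\delta^{O(\e)}]$, unless the other factor, $\pop$, carries $(5s/2-u)$-dimensional content.

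\textbf{Setting up the comparison.} We argue by contraposition. Suppose $\cont{5s/2-u}(\pop)$ is small, say $\leq \delta^{C\e}$ with $C$ large. Then there is a cover of $\pop$ by dyadic intervals $I_j$ of lengths $r_j \geq \delta$ with
\[
\sum_j r_j^{5s/2-u} \leq 2\delta^{C\e}.
\]
Every $r_j \leq \delta^{C\e/(5s/2-u)}$, so after adjusting constants all $r_j$ lie in the range where \eqref{eq.upperbox} applies. We may assume $5s/2-u>0$, since otherwise the statement is trivial because the content of a nonempty set is then $\gtrsim 1$.

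\textbf{Covering the product.} For each $j$, cover $I_j \times AA$ by $\cns{r_j}{AA} < r_j^{-u}$ squares of side $\sim r_j$. This gives a cover of $\pop \times AA$ by squares of side $\geq \delta$ whose total $5s/2$-mass is
\[
\lesssim \sum_j r_j^{-u} r_j^{5s/2} = \sum_j r_j^{5s/2-u} \lesssim \delta^{C\e}.
\]
This contradicts the Proposition once $C$ exceeds the implicit $O(\e)$ constant there. Hence $\cont{5s/2-u}(\pop)\gtrsim \dep$.

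\textbf{Main obstacle.} The only delicate point is bookkeeping the scale range. Hypothesis \eqref{eq.upperbox} only controls $AA$ for $\rho\leq \delta^{O(\e)}$, so any cover elements with $r_j$ above that threshold must be excluded. This is exactly what the smallness of the content forces, provided the $O(\e)$ exponents are chosen compatibly; in particular it requires $5s/2-u$ to be bounded below, or else a trivial case handled separately. Intervals with $r_j$ between $\delta^{O(\e)}$ and $1$ could alternatively be handled by covering $AA \subset [1/4,1]$ trivially with $O(1/r_j)$ intervals, at a loss of $\dep$.
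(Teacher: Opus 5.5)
Your argument is correct and is essentially the intended one. The paper omits the proof and defers to the deduction of \cite[Corollary 5.4]{doprod} from the product-content Proposition, which is exactly this step: take an efficient cover of $\pop$ and cover each $I_j\times AA$ by $\cns{r_j}{AA}<r_j^{-u}$ squares of side $r_j$. One small correction: $5s/2-u$ does not need to be bounded below, because $5s/2-u\le 5/4$ and $2\delta^{C\e}<1$ already give $r_j\le (2\delta^{C\e})^{4/5}$ for every cover element, and a smaller exponent only makes the $r_j$ smaller.
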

\step{Energy upper bounds}
A  result similar to the lemma below in the discrete setting is \cite[Lemma 2]{rud}. When $A=B$, $C=A-A$, $C_1=C_2=1$ and $\#A=\#B=\delta^{-s}$ this inequality is equivalent with 
$$(\#A)^{5/2}\les \cn{AA}\cn{A-A}.$$
\begin{lemma}\label{lem.representations}
		Let $0 < s \leq t < 1, C_1,C_2 \geq 1,$ and suppose that $2s \leq 2- t.$  Let $A \subset [1/2,1]$ and $B \subset [-1,1]$ be $(\delta,s,C_1)$-KT and $(\delta,t,C_2)$-KT, respectively. Let $C$ be a $\de$-separated subset of $A-B.$ Then
        \begin{equation}
            \# \{(a,b,c) \in A\times B \times C: |c- (a-b)| \lesssim \de \} \lesssim \dem  C_1C_2^{2/3}\frac{\delta^{-(s+t)/3}\cn{AA}^{2/3}\#B^{1/3}\#C^{2/3}}{\# A^{2/3}}.
        \end{equation}
		\begin{proof}
			We turn this into an incidence problem, as to exploit Theorem \ref{thm.product}. We introduce the dummy variable $a \in A$:
			\begin{align}\label{eq.step1}
				\sum_{a,a' \in A, b \in B, c \in C} 1_{|c - (a'-b)| \lesssim \delta} &= \sum_{(a',c) \in A \times C}\sum_{(a,b) \in A \times B} 1_{|c - (a'a/a-b)| \lesssim \delta}.
			\end{align}
		This counts (some of the) $\de$-incidences between $AA \times C,$ and a family of tubes parameterised by $A \times B.$ We formalise this below.
		
		 Set $\cP:= AA \times C$ and  $\cL := \{l_{a,b}: a \in A, b \in B\},$ where $l_{a,b}$ is governed by the equation
			\begin{equation}
				y = x/a - b.
			\end{equation}
			Let $\cT$ be the $O(\de)$-neighbourhoods of these lines. Since the lines are parameterised by $A^{-1} \times -B,$ the tubes $\cT$ form a $(\de,s,t,C_1,C_2)$-(quasi)-product set. Here $A^{-1} = \{1/a : a \in A\},$ and the fact that $A^{-1}$ is a $(\de,s,C_1)$-KT-set may be readily checked by observing that the map $x \mapsto x^{-1}$ is bi-Lipschitz with constant $\sim 1$ when restricted to $[1/2,1].$

		Given $(a,b) \in A \times B$ consider $T \in \cT$ with the axial line $l = l_{a,b}.$ These tubes contain the points 
		$$\{(aa',a'-b), : a' \in A, a'-b \in \cN_\de(C)\}.$$
		So define a shading $Y$ on $\cT$ by assigning these points to $Y(T).$ 
			Since $Y(T)$ is an affine transformation of $A$ it is a $(\delta,s,C_1)$-KT-set. 
			
			Returning to \eqref{eq.step1}, we observe
			\begin{equation}
			\sum_{(a',c) \in A \times C}\sum_{(a,b) \in A \times B} 1_{|c - (a'a/a-b)| \lesssim \delta}  \leq  \cI(\cP,\cT,Y).
			\end{equation}
      Since $s \leq \min\{s+t,2-s-t\}$ as per our assumption, we may apply Theorem \ref{thm.product} to obtain
      \begin{equation}\label{eq.energyincidences}
				\cI(\cP,\mathcal{T} ,Y) \lessapprox  C_1C_2^{2/3}\delta^{-(s+t)/3}\cn{AA}^{2/3}\#C^{2/3}\#A^{1/3}\#B^{1/3}.
			\end{equation}
			The result then following after dividing through by $\# A.$
		\end{proof}
	\end{lemma}
	
We use Lemma \ref{lem.representations} to prove the energy bounds below. 
\begin{lemma}\label{lem.energybounds}
  Let \(A \subset [1 / 2 , 1]\) be \((\delta,s,C_1)\)-KT and \(B \subset [-1,1]\) be \((\delta,t,C_2)\)-KT. Suppose that \(2s \leq 2 - t\). We have
\[
    \mathrm{E} _{3,\delta} (A,B) \lesssim  \delta^{- O(\epsilon)} C_1^{3}  C_2 ^{2}\frac{ \delta^{-s - t} N_{\delta} (AA)^{2} \# B}{\# A ^{2}} 
.\]
\end{lemma}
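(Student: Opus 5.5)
We derive the energy bound from Lemma \ref{lem.representations} by the standard dyadic pigeonholing on the representation function. For \(z \in \de\Z\) write
\[
r(z) := \#\{(a,b) \in A\times B : |a-b-z| \leq \de\},
\]
so that \(\en 3\de(A,B) = \sum_z r(z)^3\). Since \(r(z) \leq \#A\), the relevant range of \(r\) consists of \(\lesssim \log(1/\de)\) dyadic values. We decompose \(\de\Z\) into level sets \(C_\lambda := \{z : \lambda \leq r(z) < 2\lambda\}\), so that
\[
\en 3\de(A,B) \lesssim \log(1/\de)\, \max_\lambda \lambda^3 \# C_\lambda .
\]
We then fix the dyadic \(\lambda\) attaining the maximum and set \(C := C_\lambda\). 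This set is \(\de\)-separated, being a subset of \(\de\Z\), and is contained in \(\cN_\de(A-B)\). It is not literally contained in \(A-B\), but Lemma \ref{lem.representations} only uses \(C\) through the relation \(|c-(a-b)|\lesssim\de\), so this harmless enlargement is absorbed into constants. One may also thin \(C\) to a subset of \(A-B\) by selecting nearby points, at the cost of a constant factor.

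Next we apply Lemma \ref{lem.representations} to \(A\), \(B\) and this \(C\); the hypotheses \(2s\le 2-t\) and the KT conditions are exactly those of the present statement. Each \(c\in C\) contributes at least \(\lambda\) triples, so
\[
\lambda \#C \lesssim \dem C_1 C_2^{2/3}\,\frac{\de^{-(s+t)/3}\cn{AA}^{2/3}\#B^{1/3}\#C^{2/3}}{\#A^{2/3}}.
\]
Solving for \(\#C\) gives
\[
\#C \lesssim \dem C_1^3 C_2^2\,\frac{\de^{-s-t}\cn{AA}^2\#B}{\#A^2\,\lambda^3}.
\]
Hence \(\lambda^3\#C\) is bounded by exactly the right-hand side of the claimed estimate, with no dependence on \(\lambda\). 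Summing over the \(O(\log 1/\de)\) dyadic levels costs only a factor absorbed into \(\dem\), which completes the argument.

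The only point needing care, and so the main potential obstacle, is the application of Lemma \ref{lem.representations}. First, the lemma must hold for an arbitrary \(\de\)-separated \(C\) and not just a subset of \(A-B\); its proof does not use the latter. Second, the implicit constant in \(|c-(a-b)|\lesssim\de\) must be compatible with the definition of \(r\) (\(\le\de\)), which is true since the lemma allows an \(O(\de)\) tolerance. Everything else is routine algebra: raising \(\lambda\#C^{1/3}\lesssim X\) to the third power yields the exponents \(C_1^3C_2^2\), \(\cn{AA}^2\), \(\#B\) and \(\#A^{-2}\).
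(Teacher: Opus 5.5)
Your argument is the paper's proof: split the representation function into $O(\log(1/\delta))$ dyadic level sets, apply Lemma~\ref{lem.representations} to each level set $C$ to get $\lambda\,\#C^{1/3}\lesssim \delta^{-O(\epsilon)}C_1C_2^{2/3}\delta^{-(s+t)/3}N_\delta(AA)^{2/3}\#B^{1/3}\#A^{-2/3}$, cube this, and sum over the levels (taking level sets in $\delta\mathbb{Z}$ rather than dyadic intervals of $\mathcal{D}_\delta(A-B)$ changes nothing). One small caveat, which the paper's own statement shares: Lemma~\ref{lem.representations} is stated under the extra assumption $s\le t$, needed for the quasi-product structure of the tubes, so the hypotheses do not match ``exactly'' as you claim; this is harmless because $s\le t$ holds in the paper's applications.
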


\begin{proof}
Fix a positive integer \(k\) and let \(C_k\) be a maximal \(\delta\)-separated subset of
\[
\left\{I \in \mathcal{D}_\delta\left(A-B\right): \#\left(\pi^{-1}(I) \cap\left(A \times B\right)\right) \sim k\right\} .
\]
Here \(\pi\) is the map \((x, y) \mapsto x-y\). Observing that
\[
\mathrm{E}_{3, \delta}(A,B) \lesssim \sum_{k \text { dyadic }} k^3 \# C_k,
,\]
we look to obtain a bound on \(\# C_k\). 

We apply the above lemma with the triple \(A, D_{\text {pop }}, C_k\) to conclude
\[
    k \# C_{k}  \lesssim  \delta^{-O(\epsilon)} C_1 C_2^{2 / 3} \frac{\delta^{-(s+t) / 3} N_\delta(A A)^{2 / 3} \# B^{1 / 3} \# C_{k} ^{2 / 3}}{\# A^{2 / 3}}
,\]
or
\[
    k^{3} \# C_{k} \lesssim \delta^{- O(\epsilon)} C_1^{3}  C_2 ^{2}\frac{ \delta^{-s - t} N_{\delta} (AA)^{2} \# B}{\# A ^{2}} 
.\]
Summing over all \(\lesssim \delta^{- O(\epsilon)}\) many dyadic levels gives
\[
    \mathrm{E} _{3,\delta} (A,B)\lesssim \delta^{- O(\epsilon)} C_1^{3}  C_2 ^{2}\frac{ \delta^{-s - t} N_{\delta} (AA)^{2} \# B}{\# A ^{2}} 
.\]
\end{proof}
In particular, noting that \(D_{\text{pop}} \) is a \((\delta,\sigma,\delta^{- O(\epsilon)})\)-KT set, we have
\begin{corollary}\label{cor:energybounds}
		\begin{align}
			\en 3 \de (A) &\lesssim \dem \cn{AA}^2 \#A,\\
			\en 3 \de (A,\pop) &\lesssim \dem\frac{\cn{AA}^2 \cn{\pop}^3}{\#A\de^{-\sigma}}.\\
		\end{align}
	\end{corollary}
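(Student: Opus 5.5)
Both bounds are specialisations of Lemma \ref{lem.energybounds}, so the plan is to check its hypotheses in each case, substitute the parameters, and simplify using the cardinality information fixed in the pre-processing step.

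\textbf{First bound.} Take $B = A$ and $t = s$. The set $A$ is a $(\de,s,\de^{-2\e})$-KT-set, so $C_1 = C_2 = \dem$. The condition $2s \leq 2-t$ becomes $3s \leq 2$, which holds since $s \leq 1/2$. Note that $A - A \subset [-1/2,1/2] \subset [-1,1]$, as the lemma requires of $B$. Lemma \ref{lem.energybounds} then gives
\begin{equation}
\en 3 \de (A) \lesssim \dem \frac{\de^{-2s}\cn{AA}^2 \# A}{\# A^2}.
\end{equation}
Since $A$ is a $(\de,s,\demm)$-set, taking $r = \de$ in its definition gives $\# A \gtrsim \depp \de^{-s}$. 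Hence $\de^{-2s} \lesssim \dem \# A^2$, and the displayed bound reduces to $\dem \cn{AA}^2 \# A$.

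\textbf{Second bound.} Take $B = \pop$ and $t = \sigma$. By Step 2, $\pop$ is a $(\de,\sigma,\dem\de^{\sigma}\#\pop)$-KT-set, so we set $C_2 = \dem \de^\sigma \#\pop$, and $C_1 = \dem$ as before. Lemma \ref{lem.energybounds} gives
\begin{equation}
\en 3 \de (A,\pop) \lesssim \dem \de^{2\sigma}\#\pop^2 \frac{\de^{-s-\sigma}\cn{AA}^2\#\pop}{\#A^2}.
\end{equation}
Now use $\de^{-s} \lesssim \depp^{-1}\#A$ to replace one factor $\de^{-s}$, and use $\#\pop \sim \cn{\pop}$, which holds because $\pop \subset D$ is $\de$-separated. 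The bound becomes
\begin{equation}
\dem \frac{\cn{AA}^2\cn{\pop}^3}{\#A\,\de^{-\sigma}},
\end{equation}
which is the claimed estimate.

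\textbf{Main obstacle: the hypotheses in the second case.} Lemma \ref{lem.energybounds} needs $\sigma \geq s$ and $2s \leq 2-\sigma$.
\begin{itemize}
\item $\sigma \geq s$ was established in Step 2.
\item $2s \leq 2 - \sigma$ does not follow from the range of $s$ alone. We have $\sigma \leq 1$ because $\pop \subset [-1/2,1/2]$, so $\cns{\rho}{\pop} \lesssim \rho^{-1}$. Combined with $s \leq 1/2$ this gives $2s + \sigma \leq 2$.
\item Strictly, the lemma asks for $t < 1$. If $\sigma = 1$, I would either replace $\sigma$ by $1 - \e$, which only changes the bound by $\dem$ factors, or note that Theorem \ref{thm.product} allows $d = 1$.
\end{itemize}
The factors $\dem$ coming from uniformity and from the KT constants are all absorbed into $\dem$.
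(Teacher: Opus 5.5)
Your proposal is correct and follows the paper's argument: the corollary is Lemma \ref{lem.energybounds} specialised to $B=A$ and to $B=\pop$, and your KT constant $C_2=\dem\de^{\sigma}\#\pop$ from Step 2 (rather than the $\dem$ named in the paper's one-line justification) is exactly what yields the stated right-hand side $\dem\cn{AA}^2\cn{\pop}^3/(\#A\de^{-\sigma})$. One small nit in your hypothesis check: the $\depp$ slack in the definition of $\sigma$ only gives $\sigma\le 1+\e+o(1)$, not $\sigma\le 1$, so apply your truncation $\sigma\mapsto\min\{\sigma,1-\e\}$ whenever $\sigma>1-\e$ (not only when $\sigma=1$); this secures both $t<1$ and $2s\le 2-t$ at a cost of only $\dem$.
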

    \step{Energy lower bounds}
	We now obtain a lower bound on an algebraic combination of the quantities $$ \#D, \#\pop, \en 3 \de(A), \en 3 \de (A,\pop).$$ 
We follow quite closely the argument of \cite[Lemma 1.6]{csumproduct}, mapping solutions of
\[
	(r - a_1) - (r - a_2) = a_2 - a_1
\]
to solutions to \(p_1 - p_2 = p_3 + O(\de) \), where \(p_1,p_2 \in D_{\text{pop}}\), and
\(p_3\) is popular by some weaker notion to be defined shortly. 
We then bound from above the number
of solutions to \(p_1-p_2 = p_3\) by some multiplicative combination of the quantities mentioned previously.

\end{steps}

We introduce the higher \(\delta\)-separated energies, which
proves notationally convenient. 
For any \(A,B \subset \mathbb{R} \), and \(k \in (0,\infty )\), we define the \(k\)-th \(\delta\)-separated additive energy to be
\[
	\mathrm{E} _{k,\delta} (A,B) = \sum _{z \in \delta \mathbb{Z} } \left( \sum _{(a,b)\in A \times B} 1_{\left\lvert a - b - z \right\rvert \leq \delta} 	 \right) ^{k}
.\]
If \(A=B\), we write \(\mathrm{E}_{k,\de}(A)= \mathrm{E}_{k,\de}(A,A)\).
Recall the definition of $\cD$ and $\pop$ in step 1.

\begin{lemma}\label{lem.energybound1}
We have
\[
	\delta^{O(\epsilon)} \# A ^{\frac{15}{2} }\lesssim \mathrm{E} _{3,\delta} (A) \mathrm{E} _{3,\delta} (A, \pop )^{\frac{1}{2} } \# D \# \pop ^{\frac{1}{2} } 
\]
\end{lemma}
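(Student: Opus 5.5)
We adapt the argument of \cite[Lemma 1.6]{csumproduct} to the $\de$-discretised setting. For $p\in\de\Z$ write
\[
r_{A-A}(p)=\#\{(a,a')\in A\times A:|a-a'-p|\leq\de\},\qquad r_{A-\pop}(p)=\#\{(a,d)\in A\times\pop:|a-d-p|\leq\de\}.
\]
We will produce many approximate solutions of $p_1-p_2=p_3+O(\de)$ with $p_1,p_2\in\pop$ and $p_3$ in a weaker popular set, and then bound the number of such solutions by Hölder.

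\textbf{The weak popular set.} Let $P\subset D$ consist of those points whose $\de$-interval $I$ satisfies $\#(\pi^{-1}(I)\cap(A\times A))\geq \# A^2/(100\#D)$. Intervals outside this set carry at most $\#A^2/100$ pairs in total. Hence $\pi^{-1}(\cN_\de(P))$ contains at least $\tfrac{99}{100}\#A^2$ pairs, and every $p_3\in P$ has $r_{A-A}(p_3)\gtrsim \#A^2/\#D$.

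\textbf{Producing solutions.} By Step 1 (the choice of $\cD$ and $\pop$), a $\gtrsim\depp$ proportion of pairs $(r,a_1)\in A\times A$ have $r-a_1\in\cN_\de(\pop)$. Consider triples $(r,a_1,a_2)\in A^3$ with $r-a_1,\,r-a_2\in\cN_\de(\pop)$ and $a_2-a_1\in\cN_\de(P)$. The number of such triples is
\[
\sum_r \#\{a:r-a\in\cN_\de(\pop)\}^2-(\text{triples with }a_2-a_1\notin\cN_\de(P)).
\]
By Cauchy--Schwarz over $r$ the first term is $\gtrsim\de^{2\e}\#A^3$. The subtracted term is at most $\#A\cdot\#A^2/100$, which is not obviously small relative to $\de^{2\e}\#A^3$.

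This is the main obstacle, and I would deal with it as follows. Pass to $R_A$, so that each $r\in R_A$ has $\geq\depp\#A$ good $a$'s. Then, for each fixed $r$, the pairs $(a_1,a_2)$ from the good fibre $F_r$ with $a_2-a_1\notin\cN_\de(P)$ number at most $\#\{(a_1,a_2)\in A^2: a_2-a_1\notin \cN_\de(P)\}$. This global count is only $\leq\#A^2/100$, which exceeds $\#F_r^2\sim\de^{2\e}\#A^2$ in the worst case. To fix this I would lower the threshold defining $P$ to $\de^{C\e}\#A^2/\#D$, which costs only $\dep$ in the lower bound on $r_{A-A}$. The non-$P$ pairs then number at most $\de^{C\e}\#A^2\ll \#F_r^2$, so after averaging over $r$ we get $\gtrsim\dep\#A^3$ good triples.

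\textbf{Counting solutions.} Each good triple gives $p_1\approx r-a_1$ and $p_2\approx r-a_2$ in $\pop$, and $p_3\approx a_2-a_1\in P$, with $p_1-p_2=p_3+O(\de)$. The map from triples to $(p_1,p_2,p_3)$ has fibres controlled by $r_{A-\pop}$-type multiplicities. So I would write
\[
\dep\#A^3\lesssim \sum_{p_1,p_2\in\pop,\ p_3\in P,\ p_1-p_2\approx p_3} r(p_1,p_2)
\]
with weights satisfying $\sum r\lesssim\#A$-fibres. Weighting each $p_3$ by $r_{A-A}(p_3)\#D/\#A^2\gtrsim1$ gives
\[
\dep\#A^5/\#D\lesssim\sum_{p_1,p_2\in\pop} r_{A-A}(p_1-p_2)\cdot(\text{multiplicity}).
\]
Then apply Hölder with exponents $(3,3/2)$ after grouping by $p_3$. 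This produces $\en3\de(A)^{1/3}$ from $r_{A-A}$, together with a term $\bigl(\sum_{p}\#\{(p_1,p_2):p_1-p_2\approx p\}^{3/2}\bigr)^{2/3}$. The latter term is in turn bounded by Cauchy--Schwarz by $\#\pop^{1/2}\,\en3\de(A,\pop)^{1/2}$-type quantities, using the multiplicity $r_{A-\pop}$. Finally I would raise everything to a suitable power and match exponents against the claimed inequality $\#A^{15/2}\lesssim \en3\de(A)\en3\de(A,\pop)^{1/2}\#D\#\pop^{1/2}$. The bookkeeping of which Hölder exponents give exactly this combination would be done by checking the extremal (uniform) case, where every quantity is a power of $\#A$.
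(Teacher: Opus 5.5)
\textbf{Verdict: genuine gap in the counting step.} Your first half is correct and matches the paper. Lowering the threshold for $P$ to $\de^{2\e}\#A^2/(100\#D)$, restricting to $r\in R_A$, and using inclusion--exclusion in each fibre gives $\#X\gtrsim\de^{3\e}\#A^3$ good triples. The trouble is the counting step. It is not actually carried out, since you defer the exponents to a check on the uniform case, and the route you sketch does not reach the stated inequality.

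First, the fibre of $(r,a_1,a_2)\mapsto(r-a_1,r-a_2)$ over $(p_1,p_2)$ is $\{r\in A: r-p_1,\,r-p_2\in A\}$ (up to $\de$). It is not an $r_{A-\pop}$-type quantity, and it cannot be discarded. Your displayed factor $\bigl(\sum_p\#\{(p_1,p_2):p_1-p_2\approx p\}^{3/2}\bigr)^{2/3}$ has lost it: for $A$ an arithmetic progression of $n$ points your chain would then read $n^4\lesssim n^{4/3}\cdot n^{5/3}$, which is false.

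If you keep the multiplicity, you are bounding $\sum_p r_{A-A}(p)W(p)$ with $W(p)=\#\{(r,a_1,a_2)\in X: a_2-a_1\approx p\}$. The $(3,3/2)$ H\"older step then yields only $\en{3}{\de}(A)^{1/3}$. To reach the lemma you would need
\[
\Bigl(\sum_p W(p)^{3/2}\Bigr)^{2/3}\lesssim\dem\,\en{3}{\de}(A)^{2/3}\en{3}{\de}(A,\pop)^{1/2}\#\pop^{1/2}\#A^{-5/2},
\]
and nothing in your sketch produces the factor $\en{3}{\de}(A)^{2/3}$. Suppose instead you bound it, as you propose, by $\#\pop^{1/2}\en{3}{\de}(A,\pop)^{1/2}$ times a power of $\#A$. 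The arithmetic progression, for which the lemma is sharp, forces that power to be at least $\#A^{1/6}$. You would end with $\en{3}{\de}(A)^{1/3}\#A^{8/3}$ where the lemma has $\en{3}{\de}(A)$. This is strictly weaker unless $\en{3}{\de}(A)\sim\#A^4$, and it is useless once one inserts $\en{3}{\de}(A)\lesssim\dem\cn{AA}^2\#A$.

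\textbf{The missing idea} is to apply Cauchy--Schwarz over the image boxes \emph{before} using the popularity of $p_3$, so that the multiplicities and the support are separated. Let $f(r,a_1,a_2)=(r-a_1,r-a_2)$ and let $\cY$ be the set of pairs $(I,J)\in\cD^2$ with $(I-J)\cap\cN_\de(P)\neq\emptyset$. Then
\[
\#X^2\le\#\cY\cdot\#\{(x_1,x_2)\in X^2:|f(x_1)-f(x_2)|\lesssim\de\}.
\]
A collision forces $r-r'\approx a_1-a_1'\approx a_2-a_2'$, so the second factor is $\lesssim\sum_{r,r'}r_{A-A}(r-r')^2\le\en{3}{\de}(A)$.

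Only now use that every $p\in P$ has $r_{A-A}(p)\gtrsim\de^{2\e}\#A^2/\#D$. Apply this to the \emph{unweighted} count $\#\cY\sim\#\{(d_1,d_2,p)\in\pop^2\times P: d_1-d_2\approx p\}$. Regrouping $a_1-d_2\approx a_2-d_1$ gives $\#\cY\lesssim\frac{\#D}{\de^{2\e}\#A^2}\en{2}{\de}(A,\pop)$. Then interpolate with $\en{2}{\de}\le\en{1}{\de}^{1/2}\en{3}{\de}^{1/2}$ and $\en{1}{\de}(A,\pop)\lesssim\#A\#\pop$. This yields
\[
\de^{6\e}\#A^6\lesssim\en{3}{\de}(A)\cdot\frac{\#D}{\de^{2\e}\#A^2}\,(\#A\#\pop)^{1/2}\,\en{3}{\de}(A,\pop)^{1/2},
\]
which is the lemma. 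So $\en{3}{\de}(A)$ comes from the fibres, while $\en{3}{\de}(A,\pop)$ enters only through the support $\#\cY$.
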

\begin{proof}
We will show that the equation \(p_1 - p_2 = p_3 + O(\de)\)
has many solutions, where the \(p_i\) are some popular differences.
We first define a new set of popular intervals for which the requirement is less strict, call
\[
	\mathcal{P} = \left\{ I \in \mathcal{D} _{\delta} \left( D \right) : \# \left( \pi ^{-1} (I) \cap \left( A \times A \right)  \right) \geq \frac{\# A ^{2} \delta^{2\epsilon}}{100 \# D}  \right\} 
.\]
Then let \(P = D \cap \bigcup _{I \in \mathcal{P} } I \).
Note that \(P\) does not need to be as refined as \(D _{ \text{pop} } \), because in our eventual applications,
we do not use incidence estimates for any energy involving \(P\).

We record that
\[
	\sum _{I \in \mathcal{D} _{\delta} (D)} \# \left( \pi ^{-1} (I) \cap \left( A \times A \right)  \right) =  \# A ^{2}
.\]
By the definition of \(\mathcal{P} \), we have
\[
	\sum _{I \in \mathcal{D} _{\delta} (D) \setminus \mathcal{P}} \# \left( \pi ^{-1}(I) \cap (A \times A)  \right) < \# \left( \mathcal{D} _{\delta} (D) \setminus \mathcal{P}  \right) \cdot \frac{\# A ^{2} \delta^{2 \epsilon}}{100 \# D} \leq \frac{\delta^{2 \epsilon}}{100} \cdot \# A ^{2}
,\]
and hence, subtracting from the sum over \(I \in \mathcal{D} _{\delta} (D)\) we have 
\begin{align}\label{eq:sum-over-P}
	\sum _{I \in \mathcal{P}  } \# \left( \pi ^{-1}(I) \cap \left( A \times A \right)  \right) \geq  \# A ^{2}\left( 1 - \frac{\delta^{2 \epsilon}}{100}  \right).
\end{align}
That is, many pairs \((a_1,a_2)\) give rise to these popular differences.

We recall that the rich elements
\[
	R_{A}  = \left\{ x \in A : \# \left( (x - A)\cap \mathcal{N} _{\delta} (D_{\text{pop}} ) \right) \geq \delta^{\epsilon} \# A \right\} 
\]
satisfy \(\# R_{A} \gtrsim \delta^{\epsilon} \# A\). We count the number of tuples
\[
	\left\{ (r,a_1,a_2)\in R_{A} \times A ^{2} : r-a_1 , r-a_2 \in \mathcal{N} _{\delta} (D_{\text{pop}} ),~  a_2 - a_1 \in \mathcal{N} _{\delta} (P)   \right\} 
.\]
We then map these tuples by the truism
\[
	(r- a_1) - (r - a_2) = a_2-a_1
\]
to solutions to \(p_1 - p_2 = p_3 + O(\de)\) where \(p_1,p_2 \in \mathcal{N} _{\delta} (D_{\text{pop}} ) \) and \(p_3 \in \mathcal{N} _{\delta} (P) \).
For each fixed \(r_0 \in R_{A} \), by the definition of \(R_{A} \),
\[
	\# \left\{ (a_1,a_2)\in A ^{2} : r_0-a_1, r_0-a_2 \in \mathcal{N} _{\delta} (D_{\text{pop}} ) \right\} \geq \delta^{2 \epsilon}\# A ^{2}
.\]
Call the set of all such pairs \(S_1\). 

However, by \eqref{eq:sum-over-P}, the number of pairs \((a_1,a_2)\) with \(a_2 - a_1 \in \mathcal{N} _{\delta} (P) \) is 
\[
	\sum _{I \in \mathcal{P} } \# \left( \pi ^{-1} (I) \cap \left( A \times A \right)  \right) \geq   \# A ^{2} \left( 1 - \frac{\delta ^{2 \epsilon}}{100}  \right) 
.\]
Call the set of all such pairs \(S_2\).
We have, by inclusion-exclusion,
\[
	 \#(S_1 \cap S_2)= \#S_1  + \#S_2 -  \#(S_1 \cup S_2 )
,\]
and hence
\[
	\#(S_1 \cap S_2)  \geq \delta^{2 \epsilon}\# A ^{2} + \# A ^{2} \left( 1 - \frac{\delta ^{2 \epsilon}}{100}  \right)  - \# A ^{2} \gtrsim \delta^{2 \epsilon}\# A ^{2}
.\]
That is, for any \(r_0 \in R_{A} \),
\[
	\# \left\{ (a_1,a_2)\in A ^{2} : r_0 - a_1, r_0-a_2 \in \mathcal{N} _{\delta} (D_{\text{pop}} ) ,~ a_2 - a_1 \in \mathcal{N} _{\delta} (P) \right\} \gtrsim \delta^{2 \epsilon}\# A ^{2}
.\]

Thus, taking a union over all \(r \in R_{A} \),
\[
	\# \left\{ (r,a_1,a_2)\in R_{A} \times A^{2} : r - a_1, r-a_2 \in \mathcal{N} _{\delta} (D_{\text{pop}} ) ,~ a_2 - a_1 \in \mathcal{N} _{\delta} (P)  \right\} \gtrsim \delta ^{3 \epsilon} \# A ^{3} 
.\]
Call \(X\) the set of all such triples, and map them by
\[
	f : (r,a_1,a_2)\mapsto (r - a_1, r-a_2)
.\]
Call \(\mathcal{Y} \) the boxes into which \(f\) maps \(X\), i.e.
\[
	\cY = \left\{ (I,J) \in  \cD^2  : \left( I - J \right) \cap \mathcal{N} _{\delta} (P) \neq \varnothing  \right\} 
.\]
Noting that
\[
	f(X) \subset \bigcup _{ B\in \mathcal{Y} } B
,\]
we have by the Cauchy-Schwarz inequality,
\[
	\# X  = \sum _{B \in \mathcal{Y} } \sum _{x \in X} 1_{f(x) \in B}  \leq \# \mathcal{Y} ^{\frac{1}{2} } \# \left\{ (x_1,x_2)\in X^{2} : \left\lvert f(x_1 ) - f(x_2) \right\rvert \lesssim \delta \right\} ^{\frac{1}{2} }
,\]
so substituting our bound on \(X\),
\begin{equation} \label{eq:to-substitute}
	\delta ^{6 \epsilon} \# A ^{6}\lesssim \#\cY \# \left\{ (x_1,x_2)\in X^{2} : |f(x_1) - f(x_2)| \lesssim \delta \right\} .
\end{equation}	

We note that
\[
	\# \cY \sim \sum _{p \in P} \sum _{d_1,d_2 \in \pop }1_{\left\lvert d_1 - d_2 - p \right\rvert \leq \delta} 
.\]
Expanding the \(p\) term, we have by the definition of \(P\) that
\[
	\sum _{p \in P} \sum _{a_1,a_2 \in A} \sum _{d_1,d_2 \in D _{ \text{pop} } } 1_{\left\lvert a_2 - a_1 - p \right\rvert \leq \delta} 1_{\left\lvert d_1 - d_2 - p \right\rvert \leq \delta	} \gtrsim \frac{\delta ^{2 \epsilon}\# A ^{2}}{\# D} \cdot \sum _{p \in P} \sum _{d_1,d_2 \in \pop } 1_{\left\lvert d_1 - d_2 - p \right\rvert \leq \delta	}
,\]
and therefore
\[
	\# \cY \lesssim \frac{\# D}{\delta^{2 \epsilon}\# A^{2}} \sum _{a_1,a_2 \in A} \sum _{d_1,d_2 \in \pop }1_{\left\lvert d_1 - d_2 -a_2 + a_1 \right\rvert \leq \delta} 
.\]
Note that the rightmost sum is
\[
	\sum _{a_1,a_2 \in A} \sum _{d_1,d_2 \in D_{\text{pop}} } 1_{\left\lvert a_1-d_2 - (a_2 - d_1) \right\rvert \leq \delta} \lesssim \mathrm{E} _{2,\delta} (A,D_{\text{pop}} 	)
.\]

Turning our attention to the remaining term, we write  \(x_{i} = (r_{i} ,a_{i} ,a_{i} ')\).
we count
\[
	N := \# \left\{ (x_1,x_2)\in X^{2} : |f(x_1) - f(x_2)| \lesssim \delta \right\} 
.\]
We see that \(\left\lvert f(x_1)-  f(x_2) \right\rvert  \lesssim \delta\) if and only if
\[
	\left\lvert r - a_1 - (r' - a_1' ) \right\rvert  \lesssim \delta, \qquad  \left\lvert r - a_2 - (r' - a_2') \right\rvert \lesssim \delta
,\]
or rearranging,
\[
	\left\lvert r - r' - (a_1 - a_1') \right\rvert  \lesssim \delta, \qquad  \left\lvert r - r' - (a_2 - a_2') \right\rvert \lesssim \delta
.\]
Partitioning the count over values of \(r - r'\),
\begin{align*}
	N & \lesssim \sum _{r,r' \in R_{A} } \left( \sum _{a,a'\in A} 1_{\left\lvert r - r' - (a - a') \right\rvert \lesssim \delta} \right) ^{2} \\
    &\sim  \sum _{x \in \delta \mathbb{Z} } \sum _{r,r' \in R_{A} } 1_{\left\lvert r - r' - x \right\rvert \leq \delta} \left( \sum _{a,a' \in A} 1_{\left\lvert x - (a - a') \right\rvert \lesssim \delta}  \right) ^{2}\\
	& \lesssim \mathrm{E} _{3,\delta} (A).
\end{align*}

Substituting both of our bounds into \eqref{eq:to-substitute}, we obtain
\[
	\delta^{6 \epsilon} \# A ^{6}\lesssim \mathrm{E} _{3, \delta} (A)\cdot \frac{\# D}{\delta^{2 \epsilon}\# A ^{2}} \cdot \mathrm{E} _{2,\delta} (A, D_{\text{pop}} )
.\]
We use Cauchy-Schwarz to interpolate for \(\mathrm{E} _{2,\delta}(A, D_{\text{pop}} ) \), namely we have
\[
	\mathrm{E} _{2,\delta} (A,D_{\text{pop}} )\leq \mathrm{E} _{1,\delta} (A,D_{\text{pop}} )^{\frac{1}{2} } \mathrm{E} _{3,\delta} (A,D_{\text{pop}} ) ^{\frac{1}{2} }
.\]
Noting that
\[
	\mathrm{E} _{1,\delta}(A,D_{\text{pop}} ) = \sum _{x \in \delta \mathbb{Z} } \sum _{a \in A} \sum _{d \in D _{ \text{pop} } } 1_{\left\lvert a - d - x \right\rvert \leq \delta} 	\lesssim \# A \# \pop 	
,\]
we obtain
\[
	\delta^{6 \epsilon} \# A ^{6}\lesssim \mathrm{E} _{3, \delta} (A)\cdot \frac{\# D}{\delta^{2 \epsilon}\# A ^{2}} \cdot \left( \# A \# \pop  \right) ^{\frac{1}{2} } \mathrm{E} _{3,\delta} (A, \pop )	^{\frac{1}{2} }	
,\]
and hence by rearranging we obtain the desired statement.
\end{proof}

We quickly derive \eqref{prob.generalthm}, which is nothing more that combining Corollary \ref{cor:energybounds} and Lemma \ref{lem.energybound1}.
The four bounds from two aforementioned lemmata we seek to combine are: 
            \begin{align}
    			\delta^{O(\epsilon)} \# A ^{\frac{15}{2} }& \lesssim \mathrm{E} _{3,\delta} (A) \mathrm{E} _{3,\delta} (A, \pop )^{\frac{1}{2} } \# D \# \pop ^{\frac{1}{2} } \\
			\en 3 \de (A) &\lesssim \dem \cn{AA}^2 \#A,\\
            			\en 3 \de (A,\pop) &\lesssim \dem\frac{\cn{AA}^2 \#\pop^3 }{\#A\de^{-\sigma}},\\     
            \end{align}
            The combination of the three gives
            \begin{equation}
                \dep \#A^{14}\de^{-\sigma} \lesssim  \# \pop ^{4} \# D^2 \cn {AA}^{6},
            \end{equation}
            and a final rearrangement completes the proof.

    Now we complete the proof of Theorem \ref{thm.dis}. Let $\ups> 0$ be the largest so that 
	\begin{equation}
		\cns \rho {AA} \leq \rho^{- \ups} \text{ for all } \de < \rho < \dep. 
	\end{equation}
    Let $\tau$ be such that
    $\#\D = \de^{-\tau}.$ By Corollary \ref{cor.fursten}, we have $\sigma \geq 5s/2 -\ups.$ Therefore
	\begin{equation}
		    \dep \de^{2\ups}\# A^{33} \lesssim \# D^{12} \cn{AA}^{12},
	\end{equation}
and so,
$33s-O(\e) \leq 12\tau + 14\ups $
    and so
one of $\tau$ or $\ups$ must be larger than $33s/26 - O(\e),$
	completing the proof. 

\section{The sum-product case}\label{sec.sumprod}
We have a corresponding result for the genuine sum-product case. Many steps are identical to the difference-product case after replacing $-$ with $+$ so we do not re-spell out all of the details. We first let \(S \subset A+A\) be \(\delta\)-separated and have \(\# S = N_{\delta} (A+A)\). We define
\(S _{\text{pop}} \) exactly how we did \(D _{\text{pop}} \). In particular there is an
\(\epsilon\)-uniform
\[
	\mathcal{S} \subset \left\{ I \in \mathcal{D} _{\delta} (S) : \#\left( \pi_{+}  ^{-1} (I) \cap (A \times A) \right) \geq \frac{\# A ^{2}}{100 \# S}  \right\} 
\]
where \(\pi_{+} \) is the sum map, i.e. \(\pi_{+} (x,y)= x+y\), for which
\begin{equation} \label{eq:s-pop-fibre}
	\sum _{I \in \mathcal{S} } \# \left( \pi _{+} ^{-1} (I) \cap (A \times A) \right) \gtrsim \delta ^{O(\epsilon)}\# A ^{2}.
\end{equation}
We take \(S _{\text{pop}} = S \cap \cup \mathcal{S} \). Let $\sigma \geq 0$ be the largest exponent so that
\begin{equation}\label{eq.defofsigmasum}
  N_{\rho} (S_{\text{pop}} ) \geq \depp \rho^{-\sigma} \text{ for all } \delta \leq \rho \leq 1.
\end{equation}

By applying Lemma \ref{lem.energybounds}, we have
\begin{corollary}\label{cor:sum-energybounds}
		\begin{align}
			\en 3 \de (A,\spop) &\lesssim \dem\frac{\cn{AA}^2 \cn{\spop}^3}{\#A\de^{-\sigma}},\\
		\end{align}
        \end{corollary}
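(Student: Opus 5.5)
The plan is to apply Lemma \ref{lem.energybounds} with the pair $(A, B)$, where $B := -\spop$. The point is that $\en 3 \de (A,-\spop)$ counts exactly the additive configurations $a + p$ with $a \in A$, $p \in \spop$, because $a - (-p) = a+p$. Throughout, I read $\en 3 \de (A,\spop)$ in the sum-product case as this energy $\en 3 \de(A,-\spop)$; equivalently, one reruns the proof of Lemma \ref{lem.energybounds} with the lines $y = x/a + p$ in place of $y = x/a - b$. Note that $-\spop \subset [-2,-1]$ rather than $[-1,1]$. This is only a harmless translation, since Lemma \ref{lem.representations} uses $B$ solely through its KT property and cardinality, and the tube parametrisation $A^{-1}\times B$ is unaffected by a bounded translate.

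The parameters are fixed as follows.
\begin{itemize}
\item $C_1 = \de^{-2\e}$ and $s$ are as in the pre-processing step, since $A$ is a $(\de,s,\de^{-2\e})$-KT-set.
\item For $B$ we take $t = \sigma$ and need the constant $C_2$. Since $\mathcal S$ is $\e$-uniform and satisfies \eqref{eq.defofsigmasum}, the same argument as for $\pop$ shows that $\spop$ is a $(\de,\sigma,\dem)$-set. This uses uniformity: branching is comparable across cubes at each scale, so $\cn{\spop\cap B(x,r)} \approx \cn{\spop}/\cns r{\spop} \le \dem r^{\sigma}\cn{\spop}$.
\item Consequently $\spop$, and hence $-\spop$, is a $(\de,\sigma,C_2)$-KT-set with $C_2 = \dem\de^{\sigma}\#\spop$. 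Reflection preserves these properties.
\end{itemize}

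Next comes the hypothesis check for Lemma \ref{lem.energybounds}: we need $s \le \sigma$ and $2s \le 2-\sigma$. The second holds because $\sigma \le 1$ and $s \le 1/2$. For the first, copy the rich-point argument. By \eqref{eq:s-pop-fibre}, averaging produces some $x \in A$ with $\#((x+A)\cap\cN_\de(\spop)) \gtrsim \dep\#A$. Since $x+A$ is a translate of a $(\de,s)$-set, this forces $\cns\rho{\spop} \gtrsim \dep\rho^{-s}$, so $\sigma \ge s - O(\e)$. The $O(\e)$ loss can be absorbed, either by replacing $s$ with $s-O(\e)$ throughout or by the convention already implicit in the definition of $\sigma$.

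Finally, plug into Lemma \ref{lem.energybounds}:
\[
\en 3\de(A,-\spop) \lesssim \dem C_1^3 C_2^2 \frac{\de^{-s-\sigma}\cn{AA}^2\#\spop}{\#A^2}.
\]
Substituting $C_2^2 = \dem\de^{2\sigma}\#\spop^2$ and $\#A \le \demm\de^{-s}$ converts the right-hand side as follows:
\begin{itemize}
\item $\de^{-s}/\#A^2 \le \dem/\#A$;
\item $\de^{-\sigma}\cdot\de^{2\sigma} = \de^{\sigma} = 1/\de^{-\sigma}$;
\item the powers of $\#\spop$ combine to $\#\spop^3$.
\end{itemize}
This gives $\dem\,\cn{AA}^2\cn{\spop}^3/(\#A\,\de^{-\sigma})$, since $\#\spop = \cn{\spop}$ up to constants because $\spop$ is $\de$-separated.

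The only step requiring care is the $\dem$ bookkeeping in the $(\de,\sigma)$-set claim for $\spop$, which relies on uniformity of $\mathcal S$ exactly as in Step 2. The rest is substitution.
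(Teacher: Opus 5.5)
Your strategy is the paper's: the corollary is just Lemma \ref{lem.energybounds} with $t=\sigma$, $C_1=\de^{-2\e}$, and the KT constant $C_2=\dem\de^{\sigma}\#\spop$ coming from uniformity and \eqref{eq.defofsigmasum}. Your hypothesis checks and exponent bookkeeping fill in what the paper leaves implicit.

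The one real problem is that you bound the wrong energy. With the paper's definition, $\en{3}{\de}(A,\spop)$ is built from the differences $a-p$, and that is the quantity needed later. In Lemma \ref{lem.sumlowerbounds} the corollary is applied to the factor $\sum_{a_1,p_1}1_{|a_1-p_1-z|\le\de}$. The additive factor $a_2+p_3$ is handled separately, through $-P_\Delta'$.

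Unlike the $2$-energy (Proposition \ref{prop:e2-cauchy-schwarz}), $\en{3}{\de}(A,B)$ and $\en{3}{\de}(A,-B)$ are not comparable in general; compare $B=A$ for a Sidon-like $A$. So a bound on $\en{3}{\de}(A,-\spop)$ does not give the statement. The repair is immediate: take $B=\spop$ itself, which is an equally harmless translate since it sits in $[1,2]$. Lemma \ref{lem.energybounds} sees $B$ only through its KT constant and cardinality, both invariant under reflection, so your argument goes through verbatim.

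There is also a smaller slip. The conversion $\de^{-s}/\#A^2\lesssim\dem/\#A$ needs the lower bound $\#A\gtrsim\depp\de^{-s}$, not the upper bound $\#A<\demm\de^{-s}$ that you cite. The lower bound follows from the $(\de,s,\demm)$-set condition at $r=\de$. With these two corrections your proof coincides with the paper's.
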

    Much like Corollary \ref{cor.fursten}, below is essentially Corollary \cite[Corollary 5.4]{doprod}, with $AA$ replacing $Q$ and \(S_{\text{pop}} \) replacing $A+A.$ 

    \begin{corollary}
		\label{cor.fursten-sum} 
		Let $u$ be such that 
		\begin{equation}\label{eq.upperbox}
			\cns{\rho}{AA} < \rho^{-u} \text{ for all } \delta \leq \rho \leq \delta^{O(\epsilon)}.
		\end{equation}
		Then \begin{equation}
			\label{c nh fdvhuivju rtgtig}
			\cont{5s/2 - u}(S_{\text{pop}} ) \gtrsim \dep .
		\end{equation}

	\end{corollary}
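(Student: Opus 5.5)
The plan is to deduce the statement from a product lower bound, exactly as Corollary \ref{cor.fursten} is deduced in the difference case. The first step is to record the sum-analogue of the Proposition in the Elekes step:
\[
\cont{5s/2}(\spop \times AA) \gtrsim \dep .
\]
I would prove this by Elekes' argument with addition in place of subtraction. For $(a',b) \in A\times A$ consider the line $l_{a',b}: y = b(x-a')$. It contains the points $(a+a', ab)$ for $a \in A$, and these lie in $\cN_\de(\spop)\times AA$ whenever $a+a' \in \cN_\de(\spop)$.

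By \eqref{eq:s-pop-fibre} and pigeonholing there is a set $A'\subset A$ with $\#A' \gtrsim \dep \#A$ such that each $a'\in A'$ has $\gtrsim \dep\#A$ partners $a$ with $a+a'\in \cN_\de(\spop)$. Since $A$ is an $\e$-uniform $(\de,s,\demm)$-set, the set of such partners is again a $(\de,s,\dem)$-set. The tubes $\{l_{a',b}\}_{a'\in A',\,b\in A}$ form a $(\de,s,s)$ quasi-product family, and the shadings are $(\de,s)$-sets. The Furstenberg-type estimate behind the cited Proposition (from \cite{renwang}, valid in our range of $s$) then gives that the union of the shadings, which lies in $\cN_\de(\spop\times AA)$, has $5s/2$-dimensional content $\gtrsim\dep$ at scales $\ge\de$. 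As in the text, the parameter $\sigma$ plays no role here, and only the lower bound on popular fibres is used. I expect this step, namely checking that the refinement to $\spop$ still leaves enough rich tubes and well-spaced shadings, to be the only place needing care. It is essentially the same bookkeeping as in \cite[Theorem 5.3]{doprod}.

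The second step converts the product bound into a bound on $\spop$ alone, via a covering argument. Let $\{I_j\}$ be any cover of $\spop$ by dyadic intervals of lengths $r_j \in [\de,\infty)$. If some $r_j \ge \de^{C\e}$, then already $\sum_j r_j^{5s/2-u} \ge \de^{O(\e)}$ and there is nothing to prove. Otherwise every $r_j$ lies in the range where \eqref{eq.upperbox} applies. For each $j$, cover $AA$ by at most $\cns{r_j}{AA} < r_j^{-u}$ intervals $J$ of length $r_j$. The squares $I_j\times J$ then cover $\spop\times AA$ and have side $\sim r_j$, so
\[
\dep \lesssim \cont{5s/2}(\spop\times AA) \lesssim \sum_j r_j^{5s/2}\,\cns{r_j}{AA} \le \sum_j r_j^{5s/2-u}.
\]
Taking the infimum over covers gives $\cont{5s/2-u}(\spop)\gtrsim\dep$, which is the statement.
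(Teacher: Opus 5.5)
Your proposal is correct and follows the route the paper takes by citing \cite[Theorem 5.3, Corollary 5.4]{doprod}: first an Elekes-type bound $\cont{5s/2}(\spop\times AA)\gtrsim\dep$ from the Ren--Wang Furstenberg estimate, with the restriction to $\spop$ handled by pigeonholing the rich $a'$; then the covering argument that uses $N_{r_j}(AA)<r_j^{-u}$ to lower the exponent to $5s/2-u$. The multiscale bookkeeping that upgrades the Furstenberg covering-number bound to a Hausdorff-content bound is left implicit in your sketch, just as it is in the paper.
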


The following proposition showcasing the symmetry of \(E_{2,\delta} (A,B)\) will be taken advantage of.

\begin{proposition}\label{prop:e2-cauchy-schwarz}
For any finite \(A,B \subset \mathbb{R} \),
\[
	\mathrm{E} _{2,\delta} (A,B) \sim  \mathrm{E} _{2,\delta} (A,-B)
,\]
and as a consequence,
\[
	\mathrm{E} _{2,\delta} (A,B) \gtrsim \frac{\# A ^{2} \# B ^{2}}{N_{\delta} \left( A + B \right) } 
.\]
\end{proposition}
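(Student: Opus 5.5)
The plan is to prove the symmetry statement first by unfolding the definition of $\mathrm{E}_{2,\delta}$ into a count of approximate quadruple solutions, and then to deduce the lower bound by Cauchy--Schwarz applied to the representation function of $A+B$.

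\textbf{Step 1: the symmetry.} Expanding the square and summing over $z\in\delta\Z$ gives
\[
\mathrm{E}_{2,\delta}(A,B)=\sum_{a,a'\in A}\sum_{b,b'\in B}\#\{z\in\delta\Z: |a-b-z|\le\delta,\ |a'-b'-z|\le\delta\}.
\]
The inner count is $O(1)$. It is nonzero only if $|(a-b)-(a'-b')|\le 2\delta$, and it is at least $1$ whenever $|(a-b)-(a'-b')|\le\delta$. To see the latter, take $z$ to be a grid point of $\delta\Z$ within $\delta/2$ of the midpoint of $a-b$ and $a'-b'$. Hence $\mathrm{E}_{2,\delta}(A,B)$ is comparable to
\[
Q_c(A,B):=\#\{(a,a',b,b'): |a-b-a'+b'|\le c\delta\},
\]
with $c=1$ giving a lower bound and $c=2$ an upper bound. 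The defining condition equals $|a-a'-(b-b')|\le c\delta$, and this reads the same after replacing $(b,b')$ by $(-b',-b)$. Indeed $a+(-b')-(a'+(-b))=a-a'+b-b'$, so after the swap $b\leftrightarrow b'$ the condition becomes $|a+b-a'-b'|\le c\delta$, which is exactly the quadruple condition for $\mathrm{E}_{2,\delta}(A,-B)$. Thus both energies are comparable to the same count, up to the constant $c\in\{1,2\}$.

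\textbf{Step 2: removing the ambiguity in $c$.} It remains to show that $Q_2\lesssim Q_1$, so that the two energies are genuinely comparable. Fix $a'$ and $b'$. The points $a-b$ with $|a-b-(a'-b')|\le2\delta$ lie in an interval of length $4\delta$, which splits into $O(1)$ intervals of length $\delta$. Within each such interval, any two of these differences are within $\delta$ of one another. So a Cauchy--Schwarz argument over the $O(1)$ pieces, in the style of a covering argument, should bound $Q_2$ by $O(1)\,Q_1$.

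Alternatively, and more cleanly, one can bypass $Q_c$ altogether and write
\[
\mathrm{E}_{2,\delta}(A,B)\sim\sum_{I\in\mathcal{D}_\delta}\#\big(\pi^{-1}(I)\cap(A\times B)\big)^2,
\]
comparing neighbouring dyadic intervals via $(x+y+z)^2\le 3(x^2+y^2+z^2)$. The corresponding sum with $\pi_+$ has the same form, and I expect the main fiddly obstacle to be exactly these constant-loss comparisons between grid-based and pairwise formulations.

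\textbf{Step 3: the lower bound.} The sets $\pi_+^{-1}(I)\cap(A\times B)$, for $I\in\mathcal{D}_\delta(A+B)$, partition $A\times B$, and there are $N_\delta(A+B)$ of them. Cauchy--Schwarz gives
\[
\sum_{I}\#\big(\pi_+^{-1}(I)\cap(A\times B)\big)^2\ge\frac{\#A^2\#B^2}{N_\delta(A+B)}.
\]
The left side is $\gtrsim\mathrm{E}_{2,\delta}(A,-B)$ by the dyadic form of Step 2, since $\pi_+(a,b)=a-(-b)$. By Step 1 it is therefore $\sim\mathrm{E}_{2,\delta}(A,B)$, which proves the claim.
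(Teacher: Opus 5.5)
Your proof is correct and is essentially the paper's argument: unfold $\mathrm{E}_{2,\delta}$ into an approximate quadruple count, swap $b\leftrightarrow b'$ to pass to $-B$, then apply Cauchy--Schwarz over $\delta$-fibres. The paper runs Cauchy--Schwarz over a maximal $\delta$-separated subset of $A-B$ and then invokes the symmetry, whereas you go directly through the $\pi_+$-fibres of $A+B$; you are also more careful than the paper about the $\delta$ versus $2\delta$ losses it absorbs into $\sim$. One slip: in Step 3 you need $\sum_I \#(\pi_+^{-1}(I)\cap(A\times B))^2\lesssim\mathrm{E}_{2,\delta}(A,-B)$, not $\gtrsim$, but this is the easy half of your dyadic comparison (each fibre over an interval of length $\delta$ is captured by $O(1)$ windows $\{|a+b-z|\le\delta\}$, $z\in\delta\mathbb{Z}$), so the conclusion stands.
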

\begin{proof}
A quick calculation shows
\begin{align*}
	\mathrm{E} _{2,\delta} (A,B) & = \sum _{z \in \delta \mathbb{Z} } \sum _{a,a' \in A} \sum _{b,b' \in B} 1_{\left\lvert a - b - z \right\rvert \leq \delta} 1_{\left\lvert a' - b' - z \right\rvert \leq \delta} \sim  \sum _{a,a' \in A} \sum _{b,b' \in B} 1_{\left\lvert a - b - a' + b' \right\rvert \leq \delta} \\
	& \sim \sum _{z \in \delta \mathbb{Z} } \sum _{a,a' \in A} \sum _{b,b' \in B} 1_{\left\lvert a + b' - z \right\rvert \leq \delta} 1_{\left\lvert a' + b - z \right\rvert \leq \delta} \sim \mathrm{E} _{2,\delta} (A,-B).
\end{align*}

For the second part, let \(X \subset A-B\) be a maximal \(\delta\)-separated subset.
We see by Cauchy-Schwarz that
\[
	\# A \# B \sim \sum _{x \in X } \sum _{(a,b)\in A \times B} 1_{\left\lvert a - b - x \right\rvert \leq \delta} \leq \# X ^{\frac{1}{2} } \mathrm{E} _{2,\delta} (A,B)^{\frac{1}{2} }
,\]
and hence by squaring and rearranging
\[
	\mathrm{E} _{2,\delta} (A,B)\sim \mathrm{E} _{2,\delta} (A,-B)\gtrsim \frac{\# A ^{2} \# B^{2}}{N_{\delta} (A + B)} 
.\]
\end{proof}

To modify our argument to accommodate the sum-set, we use essentially the same idea which is used in the
discrete case.
The issue in passing from the difference-set case to the sum set case
is that one can not modify the projection identity to something involving only sums.
Indeed, we always arrive at something of the form
\[
	(a_1 + a_2) - (a_1 +a_3) = a_2- a_3
.\]
As such, one
must find a difference which can be bounded in terms of the sum-set, and for this we use Proposition \ref{prop:e2-cauchy-schwarz}.
We now map triples \((r_1,r_2,a)\) using the identity
\[
	(r_1 + a) - (r_2 + a) = r_1 - r_2
\]
to solutions to \(p_1 - p_2 = d + O(\delta)\) where \(p_1 \in S_{\text{pop}} \), \(p_2 \in P_{+}\), a weaker popular set to be defined momentarily, and \(d \in P_{\Delta} \),
where \(P_{\Delta} \subset A-A\) is a dyadic level set which supports the energy \(\mathrm{E} _{7 / 4, \delta} (R_1,R_2)\).
We further refine to an \(\epsilon\)-uniform subset of \(P_{\Delta} \) to make use of our energy estimates.

As in the case of the difference-set,
we will need two different notions of richness, corresponding to our two notions of popular sum-sets. The
first rich elements are those which are rich with respect to \(S _{\text{pop}} \), namely
\begin{equation} \label{eq:rich-1}
	R_1 = \left\{ x \in A : \# \left( \left( x + A \right) \cap \mathcal{N} _{\delta} (S_{\text{pop}} )	 \right) \geq c \delta^{\epsilon} \# A \right\},
\end{equation}
where \(c > 0\) is chosen based on the constant in \eqref{eq:s-pop-fibre} so that
\[
	\# R_{1} \gtrsim \delta ^{\epsilon} \# A
.\]

The second notion of popular set is a weaker notion
which we will not need to use any incidence bounds for. Namely, we define the popular
intervals to be
\[
	\mathcal{P}_{+} = \left\{ I \in \mathcal{D} _{\delta} (S) : \# \left( \pi_{+}  ^{-1} (I) \cap (A \times A) \right) \geq \frac{c \# A ^{2} \delta^{\epsilon}	}{4 \# S}  \right\} 
,\]
and we write \(P_{+} = S \cap \bigcup _{I \in \mathcal{P} _{+} } I\). This is the same \(c\) as in \eqref{eq:rich-1}.
The second rich elements are those which are rich with respect to \( P_{+} \), namely
\[
	R_2 = \left\{ x \in A : \# \left( (x +  A)\cap \mathcal{N} _{\delta} (P_{+} )   \right) \geq \left( 1 - \frac{c \delta ^{\epsilon}}{2}  \right) \# A \right\} 
.\]
A quick calculation grants you
\[
	\sum _{I \in \mathcal{P} _{+} } \# \left( \pi _{+} ^{-1} (I)\cap (A \times A) \right) \geq \# A ^{2} \left(  1 - \frac{c \delta^{\epsilon}}{4}  \right) 
.\]
From this, we obtain
\[
	\# R_2 \gtrsim \delta^{\epsilon} \# A
.\]

Similarly to the case of the difference-set, we have set up these rich sets so that many \(a \in A\) have both \(r_1 + a \in \mathcal{N} _{\delta} (S_{\text{pop}} ) \) and \(r_2 + a \in \mathcal{N} _{\delta} (P_{+} ) \)
for arbitrary \(r_1 \in R_1\) and \(r_2 \in R_2\). We are now ready to prove our sum-product lower bound.

\begin{lemma}\label{lem.sumlowerbounds}
We have, for any \(\eta > 0\), 
\[
    \delta^{O(\epsilon)} \delta^{- \sigma / 5    } \# A ^{\frac{11}{2} - \eta }\lesssim_{\eta}  \delta^{- s /5} N_{\delta} (A+A) ^{\frac{11}{5} } N_{\delta} (AA)^{\frac{11}{5} }
.\]
\end{lemma}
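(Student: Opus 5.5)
We prove the lemma by mirroring the proof of Lemma \ref{lem.energybound1}, with the truism $(r_1+a)-(r_2+a)=r_1-r_2$ in place of $(r-a_1)-(r-a_2)=a_2-a_1$. We then convert the resulting energies into $N_\delta(A+A)$, $N_\delta(AA)$ and $\delta^{-\sigma}$ using Corollary \ref{cor:sum-energybounds}, Lemma \ref{lem.energybounds} and Proposition \ref{prop:e2-cauchy-schwarz}.

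\textbf{Step 1: many triples.} Fix $r_1\in R_1$ and $r_2\in R_2$. By the choice of the constants, at least $c\delta^\epsilon\#A$ elements $a$ satisfy $r_1+a\in\mathcal N_\delta(S_{\mathrm{pop}})$. All but $c\delta^\epsilon\#A/2$ elements satisfy $r_2+a\in\mathcal N_\delta(P_+)$. Hence
\[
\#\{(r_1,r_2,a)\in R_1\times R_2\times A:\ r_1+a\in \mathcal N_\delta(S_{\mathrm{pop}}),\ r_2+a\in\mathcal N_\delta(P_+)\}\gtrsim \delta^{O(\epsilon)}\#A^3.
\]

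\textbf{Step 2: restrict the differences.} Next we pigeonhole $r_1-r_2$. Decompose $\mathcal D_\delta(R_1-R_2)$ into dyadic levels according to the multiplicity $k$ of $\pi^{-1}(I)\cap(R_1\times R_2)$. Choose the level $P_\Delta$ that supports a $\gtrsim_\eta \delta^{\eta}$ proportion of the relevant count. We pick it so that it also dominates $\mathrm{E}_{7/4,\delta}(R_1,R_2)$; this dyadic and H\"older bookkeeping is where the loss $\#A^{-\eta}$ enters. Then refine $P_\Delta$ to an $\epsilon$-uniform subset by Lemma \ref{lem.partition}. It is then a $(\delta,t,\delta^{-O(\epsilon)}\delta^{t}\#P_\Delta)$-KT set for some $t\ge s$; we get $t\geq s$ because each $r_1$ contributes $\gtrsim\delta^{O(\epsilon)}\#R_2$ differences, as in the proof that $\sigma\ge s$. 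Since $s\le 1/2$, the hypothesis $2s\le 2-t$ of Lemma \ref{lem.energybounds} holds.

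\textbf{Step 3: Cauchy--Schwarz on $f$.} Let $X$ be the triples of Step 1 with $r_1-r_2\in\mathcal N_\delta(P_\Delta)$; then $\#X\gtrsim\delta^{O(\epsilon)+\eta}\#A^3$. Map $X$ by $f(r_1,r_2,a)=(r_1+a,r_2+a)$. Let $\mathcal Y$ be the set of boxes $(I,J)$ with $I\in\mathcal S$, $J\in\mathcal P_+$ and $(I-J)\cap\mathcal N_\delta(P_\Delta)\neq\emptyset$. Cauchy--Schwarz gives
\[
\#X^2\le\#\mathcal Y\cdot N,
\]
where $N$ counts pairs $x_1,x_2\in X$ with $|f(x_1)-f(x_2)|\lesssim\delta$.

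\textbf{Step 4: bound the collision count $N$.} A collision forces $r_1-r_1'\approx a'-a\approx r_2-r_2'$. Therefore
\[
N\lesssim \sum_x r_{R_1-R_1}(x)\,r_{R_2-R_2}(x)\,r_{A-A}(x)\le \mathrm{E}_{3,\delta}(A)
\]
by H\"older. We then use $\mathrm{E}_{3,\delta}(A)\lessapprox N_\delta(AA)^2\#A$, which is Lemma \ref{lem.energybounds} with $B=A$ and $t=s$.

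\textbf{Step 5: bound $\#\mathcal Y$.} Exploit the weak popularity of $P_+$: each $p_2\in P_+$ has $\gtrsim\delta^{\epsilon}\#A^2/\#S$ representations $a_1+a_2$. This gives
\[
\#\mathcal Y\lesssim \frac{\#S}{\delta^{\epsilon}\#A^2}\sum_{p_1\in S_{\mathrm{pop}},\,d\in P_\Delta}\ \sum_{a_1,a_2}1_{|p_1-d-a_1-a_2|\le\delta}.
\]
This is a mixed count involving $S_{\mathrm{pop}}$, $P_\Delta$ and $A+A$. We split it by Cauchy--Schwarz/H\"older into two pieces. The first is $\mathrm{E}_{2,\delta}(A,S_{\mathrm{pop}})$, interpolated between $\mathrm{E}_{1}$ and $\mathrm{E}_{3}$ as before. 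Its $\mathrm{E}_3$ part is bounded by Corollary \ref{cor:sum-energybounds}, which produces the factor $\delta^{\sigma}$. The second piece is an energy of $A$ against $P_\Delta$. We bound it by Lemma \ref{lem.energybounds} applied to the uniform $P_\Delta$, and use $k\#P_\Delta\lesssim\#A^2$.

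\textbf{Step 6: eliminate the auxiliary quantities.} Finally we remove $k$, $\#P_\Delta$ and $\delta^{-t}$. Use $t\ge s$; this is where the factor $\delta^{-s/5}$ arises. Use Proposition \ref{prop:e2-cauchy-schwarz} to turn any lower bound on an $\mathrm{E}_2$-type quantity (a difference energy) into one in terms of $N_\delta(A+A)$. Also use $\#S_{\mathrm{pop}}\le\#S=N_\delta(A+A)$.

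\textbf{Main obstacle.} I expect Steps 5--6 to be the hardest: choosing the H\"older exponents so that all of $k$, $\#P_\Delta$ and $t$ cancel and leave exactly the exponents $11/5$, $1/5$ and $11/2$. The natural first attempt is to weight the dyadic level via $\mathrm{E}_{7/4,\delta}(R_1,R_2)$. We then bound $\mathrm{E}_{7/4}$ by interpolating between $\mathrm{E}_{3/2}$ and $\mathrm{E}_{2}$ (via Proposition \ref{prop:e2-cauchy-schwarz}) on one side and $\mathrm{E}_3$ on the other, and solve the resulting linear system for the exponents.
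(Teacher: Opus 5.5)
There is a genuine gap. The proposal never sets up the mechanism that converts the level set $P_\Delta\subset A-A$ into $N_\delta(A+A)$, and the level choice in Steps 2--3 is not consistent.

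\textbf{The level choice.} One dyadic level cannot in general both carry a $\gtrsim_\eta\delta^{\eta}$ share of the pairs $(r_1,r_2)$ and dominate $\mathrm{E}_{7/4,\delta}(R_1,R_2)$. A few differences of very large multiplicity already separate the two levels. The energy-dominant level is the one you need, and for it:
\begin{itemize}
\item you only get $\#X\gtrsim\delta^{O(\epsilon)}\Delta\,\#\mathcal P_\Delta\,\#A$, not $\#A^3$;
\item there need not be any $r_1$ with $\gtrsim\#R_2$ differences in $P_\Delta$, so your KT claim fails.
\end{itemize}
The paper instead finds a single $x_0$ with $\gtrsim\Delta\#P_\Delta/\#A$ points of $x_0-A$ in $\mathcal N_\delta(P'_\Delta)$. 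Hence $P'_\Delta$ is a $(\delta,s,\delta^{-O(\epsilon)}\#A/\Delta)$-KT set, and
\[
\mathrm{E}_{3,\delta}(A,-P'_\Delta)\lesssim\delta^{-O(\epsilon)}\delta^{-2s}N_\delta(AA)^2\#P_\Delta/\Delta^2 .
\]
The factor $\Delta^{-2}$ is needed for the bookkeeping. If instead you take the level carrying most pairs, nothing bounds $k$ from below. Then $\#P_\Delta$ survives on the right with a positive power, and can only be bounded by $N_\delta(A-A)$, or by $\#A^2/k$ with $k\ge 1$. That is far from the statement.

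\textbf{The missing idea.} What is missing is the two-sided estimate for the second energy of the rich sets:
\[
\frac{\delta^{O(\epsilon)}\#A^4}{N_\delta(A+A)}\lesssim \mathrm{E}_{2,\delta}(R_1,R_2)\le \mathrm{E}_{7/4,\delta}(R_1,R_2)^{4/5}\,\mathrm{E}_{3,\delta}(A)^{1/5},\qquad \mathrm{E}_{7/4,\delta}(R_1,R_2)\lesssim_\eta \#A^{\eta}\,\Delta^{7/4}\#\mathcal P_\Delta .
\]
Proposition \ref{prop:e2-cauchy-schwarz} enters only in the lower bound, via $R_1+R_2\subset A+A$. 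Your last paragraph runs it the wrong way: a lower bound on $\mathrm{E}_2$ cannot feed an upper bound on $\mathrm{E}_{7/4}$.

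\textbf{The H\"older split.} The Cauchy--Schwarz argument on $f$ must then bound exactly $(\Delta^{7/4}\#P_\Delta)^{4/3}$, and this fixes the split in Step 5. The paper uses H\"older with exponents $(3/2,3)$:
\[
\#\mathcal Y\lesssim\frac{\#S}{\delta^{\epsilon}\#A^2}\,\mathrm{E}_{3/2,\delta}(A,-P'_\Delta)^{2/3}\,\mathrm{E}_{3,\delta}(A,S_{\mathrm{pop}})^{1/3},\qquad \mathrm{E}_{3/2}\le\mathrm{E}_1^{3/4}\mathrm{E}_3^{1/4}.
\]
This gives
\[
\Delta^{7/3}\#P_\Delta^{4/3}\lesssim\#S\,\#A^{-17/6}N_\delta(AA)^3N_\delta(S_{\mathrm{pop}})\,\delta^{(\sigma-s)/3}.
\]
Your $(2,2)$ split, even with the correct $\#X$ and KT constant, bounds $\Delta^{5/2}\#P_\Delta^{3/2}$ instead. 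That forces $\mathrm{E}_{5/3}$ in place of $\mathrm{E}_{7/4}$, and by my computation ends in a weaker inequality giving only $83s/66$ rather than $29s/23$. So the exponents $11/2$, $11/5$ and $1/5$ of the lemma cannot come out of the plan as written.
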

\begin{proof}
By Proposition \ref{prop:e2-cauchy-schwarz}, we have
\begin{equation} \label{eq:rr-lower-bound}
	\mathrm{E} _{2,\delta} (R_1,R_2) \gtrsim \frac{\# R_1 ^{2} \# R_2 ^{2}}{\cn{R_1 + R_2}} \gtrsim \frac{\delta^{4\epsilon} \# A ^{4}}{\# S},
\end{equation}
where we have used \(R_{1} + R_{2} \subset A + A\) and \(\# S = N_{\delta} (A+A)\). Thus, it remains to find
an upper bound on \(\mathrm{E} _{2,\delta} (R_1,R_2)\). It will be convenient to first find an
upper bound on \(\mathrm{E} _{\frac{7}{4}  ,\delta} (R_1,R_2)\), and then interpolate this with \(\mathrm{E} _{3,\delta} (A)\).

We use dyadic pigeonholing to find a level set of differences \(R_1-R_2\) supporting the \(7 / 4 \)-energy.
Namely, we write
\begin{align*}
	\mathrm{E} _{\frac{7}{4}  ,\delta} (R_1,R_2) & \sim \sum _{I \in \mathcal{D} _{\delta} (R_1-R_2)} \# \left( \pi ^{-1} (I) \cap (R_1 \times R_2)  \right) ^{\frac{7}{4}  }\\
	& = \sum _{\Delta \text{ dyadic} } \sum _{I \in \mathcal{P} _{\Delta} } \# \left( \pi ^{-1} (I) \cap (R_1 \times R_2) \right) ^{\frac{7}{4}  },
\end{align*}
where
\[
	\mathcal{P} _{\Delta} = \left\{ I \in \mathcal{D} _{\delta} (R_1 - R_2) : \# \left( \pi ^{-1} (I) \cap \left( R_1 \times R_2 \right)  \right) 	\in [\Delta,2\Delta) \right\} 
.\]	
Noting that for any interval \(I\),
\[
	\# \left( \pi ^{-1} (I) \cap (R_1 \times R_2) \right) \leq \# A ^{2}
,\]
it follows that there are
\[
	\lesssim \log \# A \lesssim _{\eta} \# A ^{\eta}
\]
many dyadic levels, and hence there is \(\Delta \in \mathbb{R} \) for which
\[
	\mathrm{E} _{\frac{7}{4} ,\delta} (R_1,R_2) \lesssim _{\eta} \# A ^{\eta} \sum _{I \in \mathcal{P} _{\Delta} } \#\left( \pi ^{-1} (I) \cap (R_1 \times R_2) \right) ^{\frac{7}{4}  } \sim \# A^{\eta} \Delta ^{\frac{7}{4}  } \# \mathcal{P} _{\Delta} 
.\]

Call \(P_{\Delta} = D \cap \bigcup _{I \in \mathcal{P} _{\Delta} } I\), where we recall that \(D\) is \(\delta\)-separated with \(\# D = N_{\delta} (A-A)\). 
We refine further appealing to Lemma \ref{lem.unifsubset} for the existence of
an \(\epsilon\)-uniform \(P_{\Delta} '\), with \(\# P_{\Delta} '\gtrsim \delta^{O(\epsilon)} \# P_{\Delta}  \).
Using the \(\epsilon\)-uniformity, we find that \(P_{\Delta} '\) is fit for energy estimates,
as

\begin{proposition}\label{prop:pdeltaKT}
\(P_{\Delta} '\) is a \((\delta ,s , \delta^{-O (\epsilon)} \# A / \Delta)\)-KT set. 
\end{proposition}
\begin{proof}
Let \(\mathcal{P} _{\Delta} '\) be those intervals of \(\mathcal{P} _{\Delta} \) which have intersection
with \(P_{\Delta} '\). We have
\[
	\sum _{I \in \mathcal{P} _{\Delta} '}  \#  \left( \pi ^{-1} (I) \cap A \times A \right) \geq  \sum _{I \in \mathcal{P} _{\Delta} '} \#  \left( \pi ^{-1} (I) \cap R_1 \times R_2 \right) \thicksim \Delta \# \mathcal{P} _{\Delta} '
,\]
so letting \(R_0\) be the rich set
\[
	R_0 = \left\{ x \in A  : \# \left( (x - A) \cap \mathcal{N} _{\delta} (P_{\Delta} ') \right) \gtrsim \frac{\Delta \# \mathcal{P} _{\Delta} '}{\# A} \right\}  
,\]
we have
\[
	\# R_0 \gtrsim \frac{\Delta \# \mathcal{P} _{\Delta}' }{\# A}
.\]
Since \(R_0\) is nonempty, let \(x_0 \in R_0\) be such that
\[
	\# \left( (x_0-A)\cap N_{\delta} (P_{\Delta} ') \right) \gtrsim \frac{\Delta \# \mathcal{P} _{\Delta} '}{\# A } \gtrsim \delta^{O(\epsilon)}\frac{\Delta \# P_{\Delta} }{\# A}    
.\]

Since \(P_{\Delta} '\) is \(\epsilon\)-uniform and \(\delta\)-separated, for any \(\delta \leq \rho \leq 1\),
\[
	N_{\delta} \left( P_{\Delta} ' \cap B(x,\rho) \right) N_{\rho} (P_{\Delta} ') \thicksim \# P_{\Delta} '
.\]
Note that
\[
	N_{\rho} (P_{\Delta} ') \gtrsim  N_{\rho} \left( (x_0 - A)\cap \mathcal{N} _{\delta} (P_{\Delta} ') \right) 
,\]
and since \(x_0 - A\) is a \(\delta\)-separated \((\delta,s, \delta^{-O(\epsilon)})\)-KT set, the number of
points of \((x_0 - A)\cap \mathcal{N} _{\delta} (P_{\Delta} ') \) in a \(\rho\) ball is
\[
	(x_0 - A)\cap \mathcal{N} _{\delta} (P_{\Delta} ')\cap B(x,\rho)  \lesssim \delta^{-O(\epsilon)} \left( \frac{\rho}{\delta}  \right) ^{s}
,\]
and hence the number of such balls is 
\[
	N_{\rho} \left( (x_0 - A)\cap \mathcal{N} _{\delta} (P_{\Delta} ') \right) \gtrsim \frac{\# \left( x_0 - A \right) \cap \mathcal{N} _{\delta} (P_{\Delta} ')}{\delta^{-O(\epsilon)} \rho^{s} \delta^{-s}}    \gtrsim \delta^{O(\epsilon)} \left( \delta^{s} \frac{\Delta \# P_{\Delta} }{\# A}  \right) \rho ^{-s}
,\]
and hence
\[
	N_{\delta} \left( P_{\Delta} ' \cap B(x,\rho) \right) \lesssim \frac{\# P_{\Delta} '}{N_{\rho} (P_{\Delta} ')   } \lesssim \delta^{- O(\epsilon)} \frac{\# A}{\Delta}  \left( \frac{\rho}{\delta}  \right) ^{s}
.\] 
\end{proof}
Because \(- P_{\Delta} '\) is a \((\delta,s,\delta^{-O(\epsilon)} \# A / \Delta)\)-KT set, we may apply Lemma \ref{lem.energybounds} to give
\[
	\mathrm{E} _{3,\delta} (A,- P_{\Delta} ')\lesssim \delta^{-O(\epsilon)} \frac{\delta^{-2s} N_{\delta} (AA)^{2} \# P_{\Delta} }{\Delta^{2}} 
.\]
We will use this momentarily.
We proceed now as we did in the case of the difference-set, the change being that now we use the
truism
\[
	(r_1 + a) - (r_2 + a) = r_1 - r_2
,\]
to find solutions to the equation \(p_1 - p_2 = p_3 + O(\delta)\) where \(p_1 \in \mathcal{N} _{\delta} (S_{\text{pop}} ) \), \(p_2 \in \mathcal{N} _{\delta} (P_{+}) \), \(p_3 \in \mathcal{N} _{\delta} (P_{\Delta}' )\). 

Carrying this out, we map the set of triples
\[
	X = \left\{ (r_1,r_2,a)\in R_1 \times R_2 \times A : r_1 + a \in \mathcal{N} _{\delta} (S_{\text{pop}} ),~ r_2 + a \in \mathcal{N} _{\delta} (P_{+} ),~ r_1 - r_2 \in \mathcal{N} _{\delta} (P_{\Delta}' ) \right\} 
\]
by
\[
	f : (r_1,r_2,a) \mapsto (r_1 + a ,~ r_2 + a)
.\]
Let \(\mathcal{Y} \) be those intervals into which \(f\) maps \(X\), that is
\[
	\mathcal{Y} = \left\{ (I,J)\in\cS \times \cP_{+}  	: \left( I - J \right) \cap \mathcal{N} _{\delta} (P_{\Delta} ') \neq \varnothing  \right\} 	
,\]
where we note that
\[
	f(X) \subset \bigcup_{B \in \cY} B
.\]

We first obtain a lower bound on \(\# X\).
We choose the rich elements \((r_1,r_2)\) to be all those whose difference is in our dyadic level.
By the definition of \(\mathcal{P} _{\Delta} '\), 
\[
	\Delta \# \mathcal{P} _{\Delta}' \leq \sum _{I \in \mathcal{P} _{\Delta}' } \# \left( \pi ^{-1} (I)\cap (R_1 \times R_2) \right) \leq  \# \left\{ (r_1,r_2)\in R_1 \times R_2 : r_1 - r_2 \in \mathcal{N} _{\delta} (P_{\Delta}' )  \right\} 
.\]
We then choose all the \(a\) to be those which make \(r_1 + a \in \mathcal{N} _{\delta} (S_{\text{pop}} ) \) and \(r_2 + a \in \mathcal{N} _{\delta} (P_{+} 	) \).
For a fixed pair \((r_1,r_2)\in R_1 \times R_2\) by the definition of \(R_1\),
\[
	\# \left\{ a \in A : r_1 + a \in \mathcal{N} _{\delta} (S_{\text{pop}} ) \right\} \geq c \cdot \delta^{\epsilon}\# A
.\]
Call the set of all such \(a\) to be \(A_1\). Then, by the definition of \(R_2\),
\[
	\# \left\{ a \in A : r_2 + a \in \mathcal{N} _{\delta} (P_{+} )  \right\} \geq \left( 1 - \frac{c \delta ^{\epsilon}}{2}  \right) \# A
.\]
Call the set of all such \(a\) to be \(A_2\).
Using inclusion-exclusion,
\[
	\# \left( A_1 \cap A_2 \right) = \# A_1 + \# A_2 - \# \left( A_1 \cup A_2 \right) 
,\]
and hence
\[
	\# \left( A_1 \cap A_2 \right) \geq c \delta^{\epsilon}\# A + \# A \left( 1 - \frac{c \delta^{\epsilon}}{2}  \right) - \# A \gtrsim \delta^{\epsilon} \# A
.\]
Taking a union over all such \((r_1,r_2)\in R_1 \times R_2\), we find that
\[
	\# X \gtrsim \delta^{\epsilon} \Delta \# \mathcal{P} _{\Delta}' \# A \gtrsim \delta^{O(\epsilon)} \Delta \# \mathcal{P} _{\Delta}  \# A
.\]

By a nearly identical argument to that of the difference-set portion,
\[
	\# \left\{ (x_1,x_2)\in X^{2} : \left\lvert f(x_1) - f(x_2) \right\rvert \lesssim \delta \right\} \lesssim \mathrm{E} _{3,\delta} (A)
.\]
Thus, again by Cauchy-Schwarz, we obtain
\begin{equation} \label{eq:sum-post-CS}
	\delta^{O(\epsilon)} \Delta^{2} \# \mathcal{P} _{\Delta} ^{2}\# A ^{2} \lesssim \mathrm{E} _{3,\delta} (A) \# \mathcal{Y} .
\end{equation}

With \(\mathcal{Y} \) in terms of \(P_{\Delta} '\) instead of \(P_{\Delta} \), we may expand the \(\mathcal{P} _{+} \) term
and then use incidence estimates for the remaining terms.
We see that
\[
	\# \mathcal{Y}  \sim \sum _{p_1 \in S_{\text{pop}}   } \sum _{p_2 \in P_{+} } \sum _{p_3 \in P_{\Delta} ' }  1_{\left\lvert p_1 - p_2 - p_3 \right\rvert \leq \delta}
.\]
Expanding \(p_2\), we note that by the definition of \(\mathcal{P} _{+}  \) we have
\[
	\sum _{p_1 \in S_{\text{pop}} } \sum _{p_2 \in P_{+} } \sum _{p_3 \in P_{\Delta}' } \sum _{a_1,a_2 \in A} 1_{\left\lvert a_1 + a_2 - p_2 \right\rvert \leq \delta} 1_{\left\lvert p_1-p_2-p_3 \right\rvert \leq \delta} \gtrsim \frac{\# A ^{2} \delta^{\epsilon}}{\# S} \cdot \# \mathcal{Y} 
,\]
so eliminating the \(P_{+} \) term gives
\[
	\# \mathcal{Y}  \lesssim \frac{\# S}{\# A ^{2}\delta^{\epsilon}} \sum _{p_1 \in S_{\text{pop}} } \sum _{p_3 \in P_{\Delta}' } \sum _{a_1,a_2 \in A} 1_{\left\lvert p_1-p_3-a_1 - a_2 \right\rvert \leq \delta} 
.\]
We group this as \((a_1 - p_1) + (a_2 + p_3)\) to obtain that
\[
	\# \mathcal{Y}  \lesssim \frac{\# S}{\# A ^{2} \delta^{\epsilon}} 	\sum _{z \in \delta \mathbb{Z} } \left( \sum _{a_2 \in A} \sum _{p_3 \in P_{\Delta}' }  1_{\left\lvert a_2 + p_3 + z \right\rvert \leq \delta}  \right) \left( \sum _{a_1 \in A} \sum _{p_1 \in S _{\text{pop}} 	}  1_{\left\lvert a_1 - p_1 - z \right\rvert \leq \delta} \right) 
,\]
and hence by using H\"older we get
\[
	\# \mathcal{Y}  \lesssim \frac{\# S}{\# A ^{2}\delta^{\epsilon}} \mathrm{E} _{\frac{3}{2} ,\delta} (A,- P_{\Delta}' )^{\frac{2}{3} } \mathrm{E} _{3,\delta} (A, S_{\text{pop}} )^{\frac{1}{3} }
.\]

Combining, we have
\[
	\frac{\Delta ^{2} \# P_{\Delta} ^{2} \# A ^{4} }{\# S} \lesssim \delta^{- O(\epsilon)}\mathrm{E} _{3,\delta} (A) \mathrm{E} _{3,\delta} (A,S_{\text{pop}} )^{\frac{1}{3} } \mathrm{E} _{\frac{3}{2}  ,\delta} (A,P_{\Delta} ')^{\frac{2}{3} }
.\]
Recall our energy bound
\[
	\mathrm{E} _{3,\delta} (A,- P_{\Delta} ')\lesssim \delta^{-O(\epsilon)} \frac{\delta^{-2s} N_{\delta} (AA)^{2} \# P_{\Delta} }{\Delta^{2}} 
.\]
Interpolating for the \(3/2\) energy using this, we see by H\"older that
\[
	\mathrm{E} _{\frac{3}{2} ,\delta} (A, - P_{\Delta} ') \leq \mathrm{E} _{1,\delta} (A , - P_{\Delta} ')^{\frac{3}{4} } \mathrm{E} _{3,\delta} (A , - P_{\Delta} ')^{\frac{1}{4} }
,\]
which upon substituting gives
\[
	\mathrm{E} _{\frac{3}{2} ,\delta} (A, -P_{\Delta} ')^{\frac{2}{3} } \lesssim \frac{ \delta^{- O(\epsilon)} \# A^{\frac{1}{2} } \# P_{\Delta} ^{\frac{2}{3} } \delta^{- s / 3} N_{\delta} (AA)^{\frac{1}{3} }}{\Delta^{\frac{1}{3} }} 
.\]
Using the bound on \(\mathrm{E} _{\frac{3}{2} , \delta} (A, - P_{\Delta} ')\) along with
\[
	\mathrm{E}_{3, \delta}\left(A, S_{\mathrm{pop}}\right) \lesssim \delta^{-O(\epsilon)} \frac{N_\delta(A A)^2 N_\delta\left(S_{\mathrm{pop}}\right)^3}{\# A \delta^{-\sigma}}
\]
and
\[
	\mathrm{E}_{3, \delta}(A) \lesssim \delta^{-O(\epsilon)} N_\delta(A A)^2 \# A,
\]
gives
\[
	\frac{\Delta ^{2} \# P_{\Delta} ^{2} \# A ^{4}}{\# S} \lesssim \delta^{- O(\epsilon)} \left( N_\delta(A A)^2 \# A \right) \left( \frac{N_\delta(A A)^2 N_\delta\left(S_{\mathrm{pop}}\right)^3}{\# A \delta^{-\sigma}} \right) ^{\frac{1}{3} }  \frac{ \# A^{\frac{1}{2} } \# P_{\Delta} ^{\frac{2}{3} } \delta^{- s / 3} N_{\delta} (AA)^{\frac{1}{3} }}{\Delta^{\frac{1}{3} }} 
\]
which upon simplifying is
\[
	\Delta ^{\frac{7}{3} } \# P_{\Delta} ^{\frac{4}{3} }  \lesssim \# S\# A ^{- \frac{17}{6} }  N_{\delta} (AA)^{3} N_{\delta} (S_{\text{pop}} )   \delta^{ - s / 3 + \sigma / 3   }
.\]

We have chosen our level \(\Delta\) so that
\[
	\Delta^{\frac{7}{3} } \# P_{\Delta} ^{\frac{4}{3} }= \left( \Delta^{\frac{7}{4} } \# P_{\Delta} \right) ^{\frac{4}{3} } \gtrsim _{\eta} \# A ^{- \eta} \mathrm{E} _{\frac{7}{4} ,\delta} (R_1,R_2)^{\frac{4}{3} }   
.\]
Interpoling for the second energy, we have by H\"older that
\[
	\mathrm{E} _{2,\delta}  (R_1,R_2) \leq \mathrm{E} _{\frac{7}{4} ,\delta} (R_1,R_2)^{\frac{4}{5} } \mathrm{E} _{3,\delta} (A)^{\frac{1}{5} }
,\]
so substituting gives
\[
	\mathrm{E} _{2,\delta}  (R_1,R_2) \lesssim_{\eta}  \# A ^{\eta} \left[  \# S\# A ^{- \frac{17}{6} }  N_{\delta} (AA)^{3} N_{\delta} (S_{\text{pop}} )   \delta^{ - s / 3 + \sigma / 3   } \right] ^{\frac{3}{5}  } \left( \delta^{-O(\epsilon)} N_\delta(A A)^2 \# A \right) ^{\frac{1}{5} }
\]
or by simplifying,
\[
	\mathrm{E} _{2,\delta}  (R_1,R_2) \lesssim_{\eta} \# A^{\eta} \delta^{- O(\epsilon)} \frac{\# S ^{\frac{3}{5} } N_{\delta} (AA) ^{\frac{11}{5} } N_{\delta} (S_{\text{pop}} )^{\frac{3}{5} } \delta ^{- s /5 + \sigma /5}}{\# A ^{\frac{3}{2} }} 
.\]
Using our lower bound on the second energy, we have
\[
	\frac{\# A ^{4 - \eta}}{\# S} \lesssim_{\eta}  \delta^{-O(\epsilon)} \frac{\# S ^{\frac{3}{5} } N_{\delta} (AA) ^{\frac{11}{5} } N_{\delta} (S_{\text{pop}} )^{\frac{3}{5} } \delta ^{- s /5 + \sigma /5}}{\# A ^{\frac{3}{2} }} 
,\]
which simplifies to 
\[
	\delta^{O(\epsilon)} \delta^{- \sigma / 5    } \# A ^{\frac{11}{2} - \eta }\lesssim_{\eta}  \delta^{- s /5} N_{\delta} (A+A) ^{\frac{11}{5} } N_{\delta} (AA)^{\frac{11}{5} }
,\]
and we are finished.
\end{proof}

We now complete the proof of Theorem \ref{thm.dissum}, which simply combines Lemma \ref{lem.sumlowerbounds} and Corollary \ref{cor:sum-energybounds}. For $\eta = \e/s$ we obtain
\[
    \delta^{O(\epsilon)} \delta^{- \sigma / 5    } \# A ^{\frac{11}{2}  }\lesssim \delta^{- s /5} N_{\delta} (A+A) ^{\frac{11}{5} } N_{\delta} (AA)^{\frac{11}{5} }
.\]
Let \(\upsilon>0\) be the largest so that

\[
N_\rho(A A) \leq \rho ^{- \upsilon} \text { for all } \delta<\rho<\delta^{O(\epsilon)} \text {. }
\]
Let \(\tau\) be such that \(\# S=\delta^{-\tau}\). By Corollary \ref{cor.fursten-sum}, we have \(\sigma \geq 5 s / 2-v\). Therefore,
\[
    \delta^{O(\epsilon)} \delta ^{- s / 2 + \upsilon / 5}\# A ^{\frac{11}{2} }\lesssim \delta^{- s / 5} N_{\delta} (A+A)^{\frac{11}{5} } N_{\delta} (AA)^{\frac{11}{5} }
,\]
and therefore
\[
    \frac{29}{5} s - O(\epsilon) \leq \frac{11}{5} \tau + \frac{12}{5} \upsilon
,\]
so one of \(\tau,\upsilon\) must be larger than \(29 s / 23 - O(\epsilon)\), completing the proof.

\bibliographystyle{alpha}
\bibliography{references}

\begin{thebibliography}{DW25}

\bibitem[Blo25]{bloom}
Thomas~F. Bloom.
\newblock Control and its applications in additive combinatorics.
\newblock {\em arXiv preprint arXiv:2501.09470}, 2025.

\bibitem[Bou03]{bou03}
J.~Bourgain.
\newblock On the {E}rd\"os-{V}olkmann and {K}atz-{T}ao ring conjectures.
\newblock {\em Geom. Funct. Anal.}, 13(2):334--365, 2003.

\bibitem[Cus25]{csumproduct}
Adam Cushman.
\newblock A note on the sum-product problem and the convex sumset problem.
\newblock {\em arXiv preprint arXiv:2512.13849}, 2025.

\bibitem[DO25]{doprod}
Ciprian Demeter and William O'Regan.
\newblock Incidence estimates for quasi-product sets and applications.
\newblock {\em https://arxiv.org/abs/2511.15899}, 2025.

\bibitem[DW25]{demwangszem}
Ciprian Demeter and Hong Wang.
\newblock Szemer\'edi-{T}rotter bounds for tubes and applications.
\newblock {\em Ars Inven. Anal.}, pages Paper No. 1, 46, 2025.

\bibitem[Ele97]{elekes1997number}
Gy{\"o}rgy Elekes.
\newblock On the number of sums and products.
\newblock {\em Acta Arithmetica}, 81(4):365--367, 1997.

\bibitem[EM03]{ed}
G.~A. Edgar and Chris Miller.
\newblock Borel subrings of the reals.
\newblock {\em Proc. Amer. Math. Soc.}, 131(4):1121--1129, 2003.

\bibitem[EV66]{erd}
Paul Erd\H{o}s and Bodo Volkmann.
\newblock Additive {G}ruppen mit vorgegebener {H}ausdorffscher {D}imension.
\newblock {\em J. Reine Angew. Math.}, 221:203--208, 1966.

\bibitem[Fal85]{falconerdistance}
K.~J. Falconer.
\newblock On the {H}ausdorff dimensions of distance sets.
\newblock {\em Mathematika}, 32(2):206--212, 1985.

\bibitem[KT01]{kat}
Nets~Hawk Katz and Terence Tao.
\newblock Some connections between {F}alconer's distance set conjecture and sets of {F}urstenburg type.
\newblock {\em New York J. Math.}, 7:149--187, 2001.

\bibitem[MO25]{mator}
Andr\'as M\'ath\'e and William O'Regan.
\newblock Discretised sum-product theorems by {S}hannon-type inequalities.
\newblock {\em J. Lond. Math. Soc. (2)}, 112(6):Paper No. e70389, 16, 2025.

\bibitem[OS23]{orpshabc}
Tuomas Orponen and Pablo Shmerkin.
\newblock Projections, {F}urstenberg sets, and the abc sum-product problem.
\newblock {\em arXiv preprint arXiv:2301.10199}, 2023.

\bibitem[RS22]{rud}
Misha Rudnev and Sophie Stevens.
\newblock An update on the sum-product problem.
\newblock {\em Math. Proc. Cambridge Philos. Soc.}, 173(2):411--430, 2022.

\bibitem[RW23]{renwang}
Kevin Ren and Hong Wang.
\newblock Furstenberg sets estimate in the plane.
\newblock {\em arXiv preprint arXiv:2308.08819}, 2023.

\bibitem[SS11]{schoenshkrsumsets}
Tomasz Schoen and Ilya~D. Shkredov.
\newblock On sumsets of convex sets.
\newblock {\em Combin. Probab. Comput.}, 20(5):793--798, 2011.

\end{thebibliography}

\end{document}